\DeclareMathOperator{\tr}{tr}
\DeclareMathOperator{\Cof}{Cof}
\newcommand{\norm}[1]{\left|\!\left|#1\right|\!\right|}
\newcommand{\R}{\mathbb{R}}
\newcommand{\C}{\mathbb{C}}
\newcommand{\N}{\mathbb{N}}
\newcommand{\inv}{^{-1}}
\DeclareMathOperator{\dist}{dist}
\DeclareMathOperator{\dev}{dev}
\DeclareMathOperator{\sym}{sym}
\renewcommand{\Re}{\mathop{\mathfrak{Re}} }
\newcommand{\abs}[1]{|#1|}
\DeclareMathOperator{\SL}{SL}
\DeclareMathOperator{\SO}{SO}
\newtheorem{theorem}{Theorem}
\newtheorem{formulation}[theorem]{Theorem}
\newtheorem{lemma}[theorem]{Lemma}
\newtheorem{consequence}[theorem]{Consequence}
\newtheorem{remark}[theorem]{Remark}
\newtheorem{conj}[theorem]{Conjecture}
\theoremstyle{remark}
\newtheorem{ex}[theorem]{\rm \textbf{Example}}
\begin{document}
\title{Sum of squared logarithms - An inequality relating positive definite matrices and their matrix logarithm}
\author[1,2]{Mircea B\^{\i}rsan}
\author[1,3]{Patrizio Neff}
\author[1]{Johannes Lankeit}
\affil[1]{Lehrstuhl f\"ur Nichtlineare Analysis und Modellierung, Fakult\"at f\"ur Mathematik, Universit\"at Duisburg-Essen, Germany}
\affil[2]{Department of Mathematics, University ``A.I. Cuza'' of Ia\c si, Romania}
\footnotetext[3]{To whom correspondence should be addressed. e-mail: {\tt patrizio.neff@uni-due.de}}
\date{\today}
\maketitle

\begin{abstract}
 \noindent Let $y_1,y_2,y_3,a_1,a_2,a_3\in(0,\infty)$ be such that $ y_1\,y_2\,y_3= a_1\,a_2\,a_3$ and
 \begin{align*}
      y_1  +y_2 +y_3 \geq  a_1  + a_2 +a_3 ,\quad
      y_1 y_2  +y_2 y_3 +y_1 y_3 \geq  a_1 a_2  +a_2 a_3 +a_1 a_3.
 \end{align*}
 Then 
 \[
  (\log y_1)^2 + (\log y_2)^2 + (\log y_3)^2 \geq (\log a_1)^2 + (\log a_2)^2 + (\log a_3)^2.
 \]
 This can also be stated in terms of real positive definite $3\times3$-matrices $P_1,P_2$: If their determinants are equal $\det P_1=\det P_2$, then
 \[
  \tr P_1 \geq \tr P_2\text{ and } \tr\Cof P_1 \geq \tr\Cof P_2 \implies  \norm{\log P_1}_F^2\geq \norm{\log P_2}_F^2,
 \]
 where $\log$ is the principal matrix logarithm and $\norm{P}_F^2=\sum_{i,j=1}^3 P_{ij}^2$ denotes the Frobenius matrix norm. Applications in matrix analysis  and nonlinear elasticity are indicated.
 \medskip\\
 {\bf{Key words:}} matrix logarithm, elementary symmetric polynomials, ineqality, characteristic polynomial, positive definite matrices, means\\
 {\bf{AMS-Classification:} 26D05, 26D07}
\end{abstract}

\section{Introduction}
Convexity is a powerful source for obtaining new inequalities, see e.g. \cite{Guan06,Roventa12}. In applications coming from nonlinear elasticity we are faced, however, with variants of the squared logarithm function, see the last section. The function $(\log(x))^2$ is neither convex nor concave. Nevertheless the sum of squared logarithm inequality holds.
We will proceed as follows: In the first section we will give several equivalent formulations of the inequality, for example in terms of the coefficients of the characteristic polynomial (Theorem \ref{charpol}), in terms of elementary symmetric polynomials (Theorem \ref{elementarpolynome}), in terms of means (Theorem \ref{mittel}) or in terms of the Frobenius matrix norm (Theorem \ref{frobeniusnorm}). A proof of the inequality  will be given in Section 2, and  some counterexamples for slightly changed variants of the inequality are discussed in Section 3. In the last section an application of the sum of squared logarithms inequality in matrix analysis and in the mathematical theory of nonlinear elasticity is indicated.

\section{Formulations of the problem}
All theorems in this section are equivalent.
\begin{formulation}\label{charpol}
For $n=2$ or $n=3$ let $P_1, P_2 \in \R^{n\times n}$ be positive definite real matrices.
Let the coefficients of the characteristic polynomials of $P_1$ and $P_2$ satisfy
\[
 \tr P_1\geq \tr P_2 \text{ and } \tr\Cof P_1 \geq \tr\Cof P_2,\text{ and } \det P_1=\det P_2.
\]
Then
\[
 \norm{\log{P_1}}_F^2\geq \norm{\log{P_2}}_F^2.
\]
\end{formulation}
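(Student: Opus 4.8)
The plan is to pass to eigenvalues. Let $\lambda_1,\lambda_2,\lambda_3>0$ and $\mu_1,\mu_2,\mu_3>0$ be the eigenvalues of $P_1$ and $P_2$ and set $t_i:=\log\lambda_i$, $s_i:=\log\mu_i$. Then $\det P_1=\det P_2$ becomes $\sum_i t_i=\sum_i s_i=:T$; with $g(x):=\sum_i e^{x_i}$ and $h(x):=\sum_i e^{-x_i}$, the hypothesis $\tr P_1\ge\tr P_2$ reads $g(s)\le g(t)$, and because $\sum_{i<j}e^{x_i+x_j}=e^{T}h(x)$ on the hyperplane $\{\sum_i x_i=T\}$, the hypothesis $\tr\Cof P_1\ge\tr\Cof P_2$ reads $h(s)\le h(t)$; finally $\norm{\log P_k}_F^2$ is the squared Euclidean norm of the log-eigenvalue vector. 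For $n=2$ one is done immediately: on the line $\{x_1+x_2=T\}$ both $g$ and $x\mapsto x_1^2+x_2^2$ are strictly increasing in $\abs{x_1-x_2}$, so $g(s)\le g(t)$ already forces $\sum_i s_i^2\le\sum_i t_i^2$.

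So take $n=3$ and work in the plane $H:=\{x\in\R^3:x_1+x_2+x_3=T\}$ with centre $m:=\tfrac{T}{3}(1,1,1)$, the foot of the perpendicular from $0$ to $H$. Since $\norm{x}^2=\norm{m}^2+\norm{x-m}^2$ for $x\in H$, the asserted inequality $\sum_i s_i^2\le\sum_i t_i^2$ is equivalent to $\norm{s-m}\le\norm{t-m}=:\rho_0$, so it suffices to show that the feasible set $\Omega:=\{x\in H:g(x)\le g(t),\ h(x)\le h(t)\}$ lies in the closed disc $\bar B(m,\rho_0)$. One may assume $\rho_0>0$, otherwise $\Omega=\{m\}$. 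As $g|_H$ and $h|_H$ are strictly convex with unique minimiser $m$, they increase strictly along every ray issued from $m$; hence it is enough to prove that on the circle $C:=\{x\in H:\norm{x-m}=\rho_0\}$ every point satisfies $g\ge g(t)$ or $h\ge h(t)$, since radial monotonicity then gives $\Omega\subseteq\bar B(m,\rho_0)$.

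The crux is a description of $g|_C$ and $h|_C$. Their critical points are exactly the six points of $C$ at which two coordinates coincide: at a critical point $\nabla g(x)$ must lie in $\operatorname{span}\{(1,1,1),\,x-m\}$, i.e.\ $e^{x_i}=\alpha+\beta(x_i-\tfrac{T}{3})$ for $i=1,2,3$, and since $\xi\mapsto e^{\xi}-\beta\xi$ is strictly convex it attains any value at most twice, forcing two of the $x_i$ to agree; conversely such a point lies on the axis of a coordinate transposition leaving $g$ invariant, hence is critical. These six points are the intersections of $C$ with the three lines $\{x_i=x_j\}\cap H$ through $m$: on the line for the pair omitting the index $i$ the two points lie, from $m$, in the directions $\pm(2\mathbf e_i-\mathbf e_j-\mathbf e_k)$, one with the $i$-th coordinate largest (type $L$), one with it smallest (type $S$); the three $L$-directions are pairwise at $120^{\circ}$, so the six points lie at $60^{\circ}$ spacing and alternate $L,S,L,S,L,S$ around $C$. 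With $\delta:=\rho_0/\sqrt6$ one finds $g\equiv e^{T/3}(e^{2\delta}+2e^{-\delta})$ at the $L$-points and $g\equiv e^{T/3}(e^{-2\delta}+2e^{\delta})$ at the $S$-points, the former being larger because $\cosh\delta>1$; symmetrically $h$ is larger at the $S$-points. Hence $g|_C$ attains its maximum exactly at the $L$-points and its minimum at the $S$-points, $h|_C$ the other way around, and on each of the six open arcs between consecutive special points — which contain no critical point — $g$ and $h$ are strictly monotone in opposite senses.

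To finish, use the $S_3$-invariance of $g$, $h$ and $\norm{\cdot-m}$ to assume that $t$ lies on the closed arc from an $L$-point to the adjacent $S$-point. On that arc $g$ decreases and $h$ increases from one endpoint value to the other, so, since $g(t)$ and $h(t)$ lie between the extreme values of $g|_C$ and $h|_C$, the set $\{g<g(t)\}$ on the arc is precisely its part on the $S$-side of $t$ and $\{h<h(t)\}$ precisely its part on the $L$-side — and these are disjoint. A coordinate transposition fixing a special point carries this arc onto the next one and sends $t$ to the unique point there at which $g=g(t)$, which is simultaneously the unique point at which $h=h(t)$; hence on every arc the $g$- and $h$-level-crossings coincide, the sets $\{g<g(t)\}$ and $\{h<h(t)\}$ stay disjoint, and their union also misses the six special points. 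Therefore no point of $C$ has both $g<g(t)$ and $h<h(t)$, which is exactly what was needed. The step I expect to be the real work is this last one — organising the six arcs of $C$, their strict monotonicity (which rests on the critical-point description), and the matching of the $g$- and $h$-crossings via the reflection symmetry; all the rest reduces to the orthogonal decomposition $\norm{x}^2=\norm{m}^2+\norm{x-m}^2$, the strict convexity of $g$ and $h$, and the short zero-counting argument for the critical points.
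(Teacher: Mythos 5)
Your proof is correct, and while it shares the overall architecture of the paper's argument, the two key technical steps are established by genuinely different means, so a comparison is worthwhile. Both proofs pass to the log-eigenvalue vectors, restrict to the hyperplane $\{\sum_i x_i=T\}$, and exploit the same geometric decomposition: the conclusion only concerns the distance to the centroid $m$, the functions $g(x)=\sum_i e^{x_i}$ and $h(x)=\sum_i e^{-x_i}$ are strictly increasing radially from $m$, and on each circle centred at $m$ the two constraints ``pull in opposite angular directions,'' so that together they confine the point to the disc. The paper realises the angular step as Lemma \ref{lemma1} (on a common circle, $\sum e^{a_i}\le\sum e^{x_i}$ iff $a\le x$), proved by writing $g$ on the circle as $h(r,\varphi)=\sum_j e^{r\cos(\varphi+2\pi j/3)}$ and showing $\partial_\varphi h\le 0$ on $[0,\pi/3]$ via an explicit computation of $\partial_r(\tfrac1r e^{-r\cos\varphi}\partial_\varphi h)\le 0$ together with the limit $r\searrow 0$; the radial step is done with Chebyshev's sum inequality. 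You instead obtain the radial monotonicity from strict convexity of $g|_H$, $h|_H$ and uniqueness of the minimiser $m$, and you obtain the angular monotonicity from a critical-point count: the Lagrange condition $e^{x_i}=\alpha+\beta(x_i-\tfrac T3)$ plus strict convexity of $\xi\mapsto e^\xi-\beta\xi$ forces two coordinates to coincide, so $g|_C$ and $h|_C$ have exactly the six symmetric critical points, are strictly monotone on the six arcs, and their values at the $L$- and $S$-points are compared by the elementary identity $e^{2\delta}+2e^{-\delta}-(e^{-2\delta}+2e^{\delta})=4\sinh\delta(\cosh\delta-1)>0$. Your final step --- disjointness of $\{g<g(t)\}$ and $\{h<h(t)\}$ on each arc, matched across arcs by the coordinate transpositions --- is exactly the content of the paper's Lemma \ref{lemma1} and Consequence \ref{cons2} in sublevel-set language. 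What your route buys is the elimination of all explicit differentiation in $\varphi$ and $r$ in favour of convexity and symmetry arguments, which is arguably cleaner and makes it more transparent why three is the critical dimension (six critical points on the circle, alternating in type); what the paper's route buys is the sharper quantitative statement of Lemma \ref{lemma1} (the explicit equivalence with $a\le x$), which it then reuses to characterise the equality case in Remark \ref{remark4}. I see no gap: the reduction for $n=2$, the orthogonal decomposition $\norm{x}^2=\norm{m}^2+\norm{x-m}^2$ on $H$, the treatment of the degenerate case $\rho_0=0$, and the handling of the six special points (where one of the two strict inequalities automatically fails) are all in order.
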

For $n=3$ we will now give equivalent formulations of this statement. The case $n=2$ can be treated analogously. For its proof see Remark \ref{remark6}.
By orthogonal diagonalization of $P_1$ and $P_2$ the inequalities can be rewritten in terms of the eigenvalues $y_1,y_2,y_3$ and $a_1,a_2,a_3$ respectively.

\begin{formulation}\label{ohnequadrat}
Let the real numbers $a_1,a_2,a_3>0$ and $y_1,y_2,y_3>0$ be such that
\begin{equation}\label{2}
\begin{array}{c}
      y_1  +y_2 +y_3 \geq  a_1  + a_2 +a_3 ,\\
      y_1 y_2  +y_2 y_3 +y_1 y_3 \geq  a_1 a_2  +a_2 a_3 +a_1 a_3 ,\\
      y_1y_2y_3= a_1a_2a_3.
      \end{array}
\end{equation}
Then
\begin{equation}\label{2bis}
    (\log y_1)^2 + (\log y_2)^2 + (\log y_3)^2 \geq (\log a_1)^2 + (\log a_2)^2 + (\log a_3)^2.
\end{equation}
\end{formulation}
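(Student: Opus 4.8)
The plan is to normalize the common determinant to $1$, pass to logarithmic variables, and then reduce \eqref{2bis} to an elementary one-variable inequality by two symmetric extremization steps that force both triples to have a repeated entry. For a triple $z=(z_1,z_2,z_3)$ write $e_1(z)=z_1+z_2+z_3$, $e_2(z)=z_1z_2+z_2z_3+z_1z_3$, $e_3(z)=z_1z_2z_3$. Setting $D:=e_3(y)=e_3(a)$ and replacing each $y_i$ by $y_i/D^{1/3}$ and each $a_i$ by $a_i/D^{1/3}$ leaves \eqref{2} unchanged (the conditions on $e_1,e_2$ rescale homogeneously) and changes both sides of \eqref{2bis} by the same additive constant $-\tfrac13(\log D)^2$, so we may assume $e_3(y)=e_3(a)=1$. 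Writing $\eta_i:=\log y_i$, $\alpha_i:=\log a_i$, and using $e_3=1$ to rewrite $e_2(y)=\sum_i 1/y_i$, the hypotheses become
\[
 \textstyle\sum_i\eta_i=\sum_i\alpha_i=0,\qquad \sum_i e^{\eta_i}\ge\sum_i e^{\alpha_i},\qquad \sum_i e^{-\eta_i}\ge\sum_i e^{-\alpha_i},
\]
and the goal is $\sum_i\eta_i^2\ge\sum_i\alpha_i^2$.

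Suppose this fails for some admissible pair. Keeping $\alpha$ fixed, minimize $\Phi(\eta):=\sum_i\eta_i^2$ over the closed set $\mathcal C:=\{\eta:\sum_i\eta_i=0,\ \sum_i e^{\eta_i}\ge\sum_i e^{\alpha_i},\ \sum_i e^{-\eta_i}\ge\sum_i e^{-\alpha_i}\}$; since $\Phi$ is coercive on the hyperplane a minimizer $\eta^\ast\in\mathcal C$ exists, and because $(\log y_1,\log y_2,\log y_3)\in\mathcal C$ the minimal value obeys $\Phi(\eta^\ast)<\sum_i\alpha_i^2$. Now inspect which inequality constraints are active at $\eta^\ast$. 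If neither is, then $\eta^\ast=0$, forcing $\sum_i e^{\alpha_i}\le3$ and $\sum_i e^{-\alpha_i}\le3$, while AM--GM (using $e_3(a)=1$) gives both $\ge3$; hence $\alpha=0$ and $\Phi(\eta^\ast)<0$, absurd. If both are active, then $e_1,e_2,e_3$ of $e^{\eta^\ast}$ agree with those of $a$, so $\eta^\ast$ is a permutation of $\alpha$ and $\Phi(\eta^\ast)=\sum_i\alpha_i^2$, absurd. If exactly one is active, the Karush--Kuhn--Tucker condition reads $2\eta_i^\ast-\mu e^{\eta_i^\ast}=\lambda$ for all $i$ (first constraint) or $2\eta_i^\ast+\mu e^{-\eta_i^\ast}=\lambda$ for all $i$ (second), with $\mu\ge0$; since $t\mapsto 2t-\mu e^{t}$ is strictly increasing then strictly decreasing and $t\mapsto 2t+\mu e^{-t}$ is strictly convex, each takes any value at most twice, so two of $\eta_1^\ast,\eta_2^\ast,\eta_3^\ast$ coincide. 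The mirror argument — now fixing $y^\ast:=(e^{\eta_1^\ast},e^{\eta_2^\ast},e^{\eta_3^\ast})$ and \emph{maximizing} $\sum_i(\alpha_i')^2$ over the convex compact set $\{\sum_i\alpha_i'=0,\ \sum_i e^{\alpha_i'}\le e_1(y^\ast),\ \sum_i e^{-\alpha_i'}\le e_2(y^\ast)\}$ (compact because the two inequalities bound it), using that a convex function attains its maximum on the boundary and the same Lagrange computation on each boundary arc — reduces us likewise to an $a$-triple with a repeated entry.

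So it suffices to prove the theorem when $y=(r,s,s)$ with $rs^2=1$ and $a=(p,q,q)$ with $pq^2=1$. With $s=e^u$ (hence $r=e^{-2u}$) and $q=e^v$ (hence $p=e^{-2v}$) one computes $\sum_i(\log y_i)^2=6u^2$, $\sum_i(\log a_i)^2=6v^2$, $e_1(y)=e^{-2u}+2e^{u}=:f(u)$ and $e_2(y)=e^{2u}+2e^{-u}=f(-u)=:g(u)$, and the theorem becomes the one-variable statement
\[
 f(v)\le f(u)\ \text{ and }\ g(v)\le g(u)\ \Longrightarrow\ |v|\le|u|.
\]
From $f'(t)=2e^{-2t}(e^{3t}-1)$, $f$ is strictly decreasing on $(-\infty,0]$ and strictly increasing on $[0,\infty)$, and so is $g=f(-\,\cdot\,)$; moreover $g(t)-g(-t)=4\sinh t\,(\cosh t-1)>0$ for $t>0$. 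Assume $|v|>|u|$. If $v\ge0$: for $u\ge0$, monotonicity of $f$ forces $v\le u$, a contradiction; for $u<0$ one has $-v<u<0$, whence $g(v)>g(-v)>g(u)$, contradicting $g(v)\le g(u)$. The case $v<0$ reduces to $v>0$ via $f(-t)=g(t)$. (For $n=2$ the same normalization collapses everything to $\cosh t_y\ge\cosh t_a\Rightarrow t_y^2\ge t_a^2$.)

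I expect the main obstacle to be the technical bookkeeping in the two extremization steps: checking closedness (respectively compactness) of the feasible sets, that a constraint qualification holds at the extremizer — it can fail only at the fully symmetric point $0$, which is dispatched directly — and that the ``both constraints active'' case genuinely recovers the whole characteristic polynomial, hence the triple up to permutation. Once these are settled, the first-order conditions really do force a repeated eigenvalue on each side, after which the surviving inequality between the two scalars $u$ and $v$ is a short calculus check.
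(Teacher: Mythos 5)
Your proof is correct, but it takes a genuinely different route from the paper's. Both arguments start with the same normalization (pass to $\eta_i=\log y_i$, $\alpha_i=\log a_i$ with $\sum_i\eta_i=\sum_i\alpha_i=0$ and rewrite $e_2$ as $\sum_i e^{-\eta_i}$; this is the chain of reformulations ending in Theorem \ref{exp2}). From there the paper parametrizes an ordered zero-sum triple with prescribed sum of squares by a single angle, $a=r\cos\varphi$, $b=r\cos(\varphi+2\pi/3)$, $c=r\cos(\varphi-2\pi/3)$, and shows by explicit differentiation (plus Chebyshev's sum inequality) that $h(r,\varphi)$ is decreasing in $\varphi$ and increasing in $r$; this yields the sharp biconditional of Lemma \ref{lemma1}, whence the trichotomy of Consequence \ref{cons2} and the radial rescaling $t_i=kz_i$ finish the proof. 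You instead run two constrained extremizations to force a repeated entry in each triple and then do a one-variable calculus check. The delicate points of your version do check out: the stationarity equation has the same form ($2t-\mu e^{t}=\lambda$ or $2t+\mu e^{-t}=\lambda$ with $\mu\ge0$) in both the minimization and the maximization, so the at-most-two-level-points count applies in each case; LICQ can fail only at the fully symmetric point, which AM--GM dispatches; the two-active-constraints case genuinely recovers the characteristic polynomial because $e_2=e_3\sum_i x_i^{-1}$ with $e_3=1$; and the final monotonicity argument with $f(t)=e^{-2t}+2e^{t}$, $g=f(-\,\cdot\,)$ and $g(t)-g(-t)=4\sinh t\,(\cosh t-1)$ is sound, including the reflection handling $v<0$. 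What the paper's route buys is the stronger equivalence in Lemma \ref{lemma1} and, with it, the equality characterization of Remark \ref{remark4} essentially for free; what yours buys is a standard ``reduce to repeated eigenvalues'' template familiar from symmetric inequalities, at the cost of heavier variational bookkeeping --- and note that it would not scale cleanly to the $n$-variable conjecture, where a single active constraint no longer collapses the extremizer to one free parameter.
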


The elementary symmetric polynomials, see e.g.~\cite[p.178]{Steele2004}
\begin{align*}
 e_0(y_1,y_2,y_3)&=1\\
 e_1(y_1,y_2,y_3)&=y_1+y_2+y_3\\
 e_2(y_1,y_2,y_3)&=y_1y_2+y_1y_3+y_2y_3\\
 e_3(y_1,y_2,y_3)&=y_1y_2y_3
\end{align*}
are known to have the Schur-concavity property (i.e. $-e_k$ is Schur-convex) \cite{Guan06,Khan2012}, see \eqref{schur_convex}. It is possible to express the problem in terms of these elementary symmetric polynomials as follows:
\begin{formulation}
\label{elementarpolynome}
Let $a_1,a_2,a_3>0$ and $y_1,y_2,y_3>0$ satisfy
\[
 e_1(y_1,y_2,y_3)\geq e_1(a_1,a_2,a_3),\quad e_2(y_1,y_2,y_3)\geq e_2(a_1,a_2,a_3),\quad e_3(y_1.y_2,y_3)=e_3(a_1,a_2,a_3).
\]
Then
\[
 e_1((\log y_1)^2,(\log y_2)^2,(\log y_3)^2)\geq e_1((\log a_1)^2,(\log a_2)^2,(\log a_3)^2).
\]
\end{formulation}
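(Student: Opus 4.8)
Strictly speaking there is nothing left to prove beyond Theorem~\ref{ohnequadrat}: since $e_1(s,t,u)=s+t+u$, the conclusion $e_1((\log y_1)^2,(\log y_2)^2,(\log y_3)^2)\ge e_1((\log a_1)^2,(\log a_2)^2,(\log a_3)^2)$ is word for word $\sum_{i=1}^3(\log y_i)^2\ge\sum_{i=1}^3(\log a_i)^2$, and the three hypotheses $e_1(y)\ge e_1(a)$, $e_2(y)\ge e_2(a)$, $e_3(y)=e_3(a)$ are exactly the hypotheses \eqref{2}. So Theorem~\ref{elementarpolynome} is merely Theorem~\ref{ohnequadrat} rewritten with symmetric polynomials, and all I describe here is how I would attack the substantive content, i.e.\ the implication \eqref{2}$\Rightarrow$\eqref{2bis}.

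\emph{Normalization and reformulation.} Replacing $(y_i)$ by $(t\,y_i)$ and $(a_i)$ by $(t\,a_i)$ with one common $t>0$ leaves \eqref{2} untouched and, because $\prod y_i=\prod a_i$, changes neither side of \eqref{2bis} (the extra terms $2\log t\,(\log y_i-\log a_i)$ add up to $2\log t\,\log(\prod y_i/\prod a_i)=0$). Hence I may assume $y_1y_2y_3=a_1a_2a_3=1$. Setting $x_i:=\log y_i$ and $b_i:=\log a_i$ this gives $\sum x_i=\sum b_i=0$; and since for $n=3$ with product $1$ one has $y_iy_j=1/y_k$, the second line of \eqref{2} turns into $\sum e^{-x_i}\ge\sum e^{-b_i}$. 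The problem is thus: given $\sum x_i=\sum b_i=0$, $\sum e^{x_i}\ge\sum e^{b_i}$ and $\sum e^{-x_i}\ge\sum e^{-b_i}$, show $\sum x_i^2\ge\sum b_i^2$. Note the symmetry $x\mapsto-x$, which swaps the two inequality constraints.

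\emph{Variational setup.} Let $S=\{x\in\R^3:\ x_1+x_2+x_3=0,\ \sum e^{x_i}\ge\sum e^{b_i},\ \sum e^{-x_i}\ge\sum e^{-b_i}\}$; it is closed, contains $b$, and $x\mapsto\sum x_i^2$ is coercive on $\R^3$, so the infimum over $S$ is attained at some $x^\ast$, and since $b\in S$ it suffices to show that $b$ is a minimizer. At a minimizer $x^\ast\neq0$ at least one inequality constraint is active (otherwise replacing $x^\ast$ by $(1-\varepsilon)x^\ast$ stays in $S$ and strictly lowers $\sum x_i^2$), while $x^\ast=0$ can occur only if $b=0$, which is trivial. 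If both constraints are active then $\sum e^{x_i^\ast}=\sum e^{b_i}$, $\sum e^{-x_i^\ast}=\sum e^{-b_i}$ and $\prod e^{x_i^\ast}=1=\prod e^{b_i}$; these force the three elementary symmetric functions of $(e^{x_i^\ast})$ to coincide with those of $(e^{b_i})$, so $x^\ast$ is a permutation of $b$ and equality holds. If only one constraint is active, say the first, the Lagrange condition reads $2x_i^\ast=\lambda+\mu e^{x_i^\ast}$, and since $t\mapsto 2t-\mu e^t$ has at most two zeros, two components of $x^\ast$ must coincide; writing $x^\ast=(u,u,-2u)$, everything reduces to the one-variable claim: if $2e^{u}+e^{-2u}=\sum e^{b_i}$ and $e^{2u}+2e^{-u}\ge\sum e^{-b_i}$, then $6u^2\ge\sum b_i^2$ (together with its mirror image under $x\mapsto-x$).

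\emph{Main obstacle.} All the real work is in this last one-dimensional estimate, and it is genuinely delicate precisely because $(\log x)^2$ is neither convex nor concave: the equation $2e^u+e^{-2u}=\sum e^{b_i}$ has, in general, two solutions $u$, and for the ``wrong'' one the inequality $6u^2\ge\sum b_i^2$ fails --- so the second constraint $e^{2u}+2e^{-u}\ge\sum e^{-b_i}$ must be brought in exactly to eliminate that root. Turning this, together with the root-counting that underlies the reduction, into a rigorous argument is the heart of the matter; I expect it to require a careful (if elementary) analysis of the function $u\mapsto 2e^u+e^{-2u}$ and of the region the two constraints cut out, rather than any soft or purely convexity-based reasoning.
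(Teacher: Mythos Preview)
Your observation that Theorem~\ref{elementarpolynome} is literally Theorem~\ref{ohnequadrat} in symmetric-polynomial notation is correct, and the paper treats all the theorems of that section as reformulations of one another. Your normalization to $\sum x_i=\sum b_i=0$ also matches the paper's passage to Theorem~\ref{exp2}.

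From there, however, your proposal diverges from the paper and stops short of a proof. Your variational/KKT reduction to the configuration $x^\ast=(u,u,-2u)$ is sound, but---as you yourself say---the decisive one-variable estimate ``$2e^{u}+e^{-2u}=\sum e^{b_i}$ and $e^{2u}+2e^{-u}\ge\sum e^{-b_i}$ imply $6u^2\ge\sum b_i^2$'' is left entirely open. That is not a technicality: it is the whole content of the theorem repackaged, and nothing you have written so far constrains which of the two roots of $2e^{u}+e^{-2u}=\sum e^{b_i}$ you are at, nor why the second constraint singles out the right one. So as it stands the proposal is an outline with the hard step missing.

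The paper avoids your KKT reduction altogether. It works with the contrapositive (Theorem~\ref{forproof}) and parametrizes the circle $\{x:\sum x_i=0,\ \sum x_i^2=\tfrac32 r^2\}$ by an angle $\varphi\in[0,\tfrac{\pi}{3}]$ via $x_1=r\cos\varphi$, $x_2=r\cos(\varphi+\tfrac{2\pi}{3})$, $x_3=r\cos(\varphi-\tfrac{2\pi}{3})$. The key Lemma~\ref{lemma1} shows that on this circle the function $h(r,\varphi)=\sum e^{x_i}$ is \emph{monotone in $\varphi$}, which is established by differentiating and checking a sign via an auxiliary function $F(r,\varphi)$. A second, easier monotonicity---$h$ increasing in $r$---follows from Chebyshev's sum inequality. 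Together these give: if $\sum z_i^2<\sum c_i^2$, scale $z$ radially to $t=kz$ with $\sum t_i^2=\sum c_i^2$ (radial monotonicity gives $\sum e^{z_i}<\sum e^{t_i}$), and then the angular monotonicity forces one of $\sum e^{\pm t_i}\le\sum e^{\pm c_i}$. Your endpoint $(u,u,-2u)$ is precisely $\varphi=\tfrac{\pi}{3}$ in this parametrization, so the paper's angular-monotonicity lemma is exactly the ingredient that would close your gap; but the paper never isolates your one-variable inequality as such, and its route through $(r,\varphi)$-monotonicity is what actually does the work.
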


Because $y_1y_2y_3= a_1a_2a_3>0$, we have
$$y_1 y_2  +y_2 y_3 +y_1 y_3 \geq  a_1 a_2  +a_2 a_3 +a_1 a_3 \quad \Leftrightarrow\quad \dfrac{1}{y_1} + \dfrac{1}{y_2} + \dfrac{1}{y_3} \geq  \dfrac{1}{a_1} + \dfrac{1}{a_2} + \dfrac{1}{a_3}.$$ Thus we obtain

\begin{formulation}
  \label{neff_log_formulation}
Let the real numbers $a_1,a_2,a_3>0$ and $y_1,y_2,y_3>0$ be such that
\begin{equation}\label{2neff}
\begin{array}{c}
      y_1  +y_2 +y_3 \geq  a_1  + a_2 +a_3 ,\\
      \dfrac{1}{y_1} + \dfrac{1}{y_2} + \dfrac{1}{y_3} \geq  \dfrac{1}{a_1} + \dfrac{1}{a_2} + \dfrac{1}{a_3} , \\
         y_1y_2y_3= a_1 a_2 a_3.
      \end{array}
\end{equation}
Then
\begin{equation}\label{2bis}
    (\log y_1)^2 + (\log y_2)^2 + (\log y_3)^2 \geq (\log a_1)^2 + (\log a_2)^2 + (\log a_3)^2.
\end{equation}
\end{formulation}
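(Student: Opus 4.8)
The plan is to prove the eigenvalue form, Theorem~\ref{ohnequadrat}, which by the equivalences already established yields Theorem~\ref{neff_log_formulation}. First note that replacing $(y_1,y_2,y_3,a_1,a_2,a_3)$ by $(ty_1,ty_2,ty_3,ta_1,ta_2,ta_3)$ preserves the three hypotheses and, since $\sum_i\log y_i=\log(y_1y_2y_3)=\log(a_1a_2a_3)=\sum_i\log a_i$, changes the two sides of the conclusion by the same amount; so we may and do assume $y_1y_2y_3=a_1a_2a_3=1$, i.e.\ $\sum_i\log y_i=\sum_i\log a_i=0$. For positive $y_1,y_2,y_3$ with product $1$ write $e_1=y_1+y_2+y_3$, $e_2=y_1y_2+y_2y_3+y_1y_3$, and regard
\[
 F(e_1,e_2):=(\log y_1)^2+(\log y_2)^2+(\log y_3)^2
\]
as a function on $\Delta=\{(e_1,e_2)\in\R^2:\ t^3-e_1t^2+e_2t-1\ \text{has three positive real roots}\}$; this is legitimate because $y_1,y_2,y_3$ are the (unordered) roots of that cubic and $(\log(\cdot))^2$ is continuous and symmetric on $(0,\infty)$, and in fact one checks $F$ is $C^1$ on the interior of $\Delta$, even across the discriminant locus. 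The hypotheses of Theorem~\ref{ohnequadrat} say precisely that $(e_1(y),e_2(y))\in\Delta$, $(e_1(a),e_2(a))\in\Delta$ and $(e_1(y),e_2(y))\ge(e_1(a),e_2(a))$ componentwise, and the conclusion reads $F(e_1(y),e_2(y))\ge F(e_1(a),e_2(a))$, the right-hand side being $\sum_i(\log a_i)^2$ since the coefficient triple of the characteristic polynomial determines the multiset of eigenvalues. So everything reduces to showing that $F$ does not decrease when $e_1$ or $e_2$ is increased, with the product held equal to $1$.

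The engine is a derivative computation. Put $p(t)=t^3-e_1t^2+e_2t-1=\prod_j(t-y_j)$. Differentiating the identities $p(y_i)=0$ implicitly with respect to $e_1$ and $e_2$ gives $\partial y_i/\partial e_1=y_i^2/p'(y_i)$ and $\partial y_i/\partial e_2=-y_i/p'(y_i)$, whence
\[
 \frac{\partial F}{\partial e_1}=2\sum_{i=1}^3\frac{y_i\log y_i}{p'(y_i)},\qquad
 \frac{\partial F}{\partial e_2}=-2\sum_{i=1}^3\frac{\log y_i}{p'(y_i)}.
\]
Since $p'(y_i)=\prod_{j\ne i}(y_i-y_j)$, the right-hand sides are exactly $2\,g[y_1,y_2,y_3]$ with $g(t)=t\log t$, respectively $-2\,h[y_1,y_2,y_3]$ with $h(t)=\log t$, where $g[\,\cdot\,,\,\cdot\,,\,\cdot\,]$ denotes the second divided difference. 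By the mean value theorem for divided differences, $g[y_1,y_2,y_3]=\tfrac12\,g''(\xi)$ and $h[y_1,y_2,y_3]=\tfrac12\,h''(\eta)$ for some $\xi,\eta$ in the interval spanned by $y_1,y_2,y_3$; as $g''(t)=1/t>0$ and $h''(t)=-1/t^2<0$, we obtain $\partial F/\partial e_1>0$ and $\partial F/\partial e_2>0$ at every point of $\Delta$ with distinct roots, and — the divided-difference identities persisting for coincident nodes, together with the $C^1$-regularity of $F$ — on all of $\mathrm{int}\,\Delta$. Hence $F$ is nondecreasing along any segment in $\Delta$ along which both $e_1$ and $e_2$ are nondecreasing.

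The difficulty is that $\Delta$ is \emph{not} convex: its boundary is the double-root locus, which consists of two arcs emanating from the triple-root point $(3,3)$ — a cusp — and running to infinity, so there need be no coordinatewise-monotone staircase inside $\Delta$ joining $(e_1(a),e_2(a))$ to $(e_1(y),e_2(y))$. To get around this, write $E_i=e_i(a)$ and minimise $F$ over the closed set $S=\{(e_1,e_2)\in\Delta:\ e_1\ge E_1,\ e_2\ge E_2\}$. Since $F|_S$ is coercive (sending $e_1$ or $e_2$ to $\infty$ forces a root to $\infty$, hence $F\to\infty$), a minimiser $Z^\ast$ exists, and it is enough to prove $Z^\ast=(E_1,E_2)$: then $\sum_i(\log y_i)^2=F(e_1(y),e_2(y))\ge F(Z^\ast)=F(E_1,E_2)=\sum_i(\log a_i)^2$. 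Suppose $Z^\ast\neq(E_1,E_2)$; then at least one inequality constraint is inactive at $Z^\ast$, say $e_1^\ast>E_1$ (the other case is symmetric, $e_1$ and $e_2$ playing symmetric roles under $y\mapsto1/y$ once the product is normalised to $1$). If $Z^\ast\in\mathrm{int}\,\Delta$, decreasing $e_1$ slightly keeps $Z^\ast$ in $S$ and strictly lowers $F$ by $\partial F/\partial e_1>0$ — a contradiction. If $Z^\ast\in\partial\Delta$ with $e_2^\ast=E_2$, then, $(E_1,E_2)$ being in $\Delta$, the horizontal slice $\Delta\cap\{e_2=E_2\}$ is an interval containing both $E_1$ and $e_1^\ast>E_1$, so $e_1^\ast$ is its right-hand endpoint; moving leftward along this slice re-enters $\mathrm{int}\,\Delta$, stays in $S$, and lowers $F$ — again a contradiction. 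If finally $Z^\ast\in\partial\Delta$ with $e_2^\ast>E_2$ too, move along the boundary arc through $Z^\ast$ towards the cusp: both $e_1$ and $e_2$ decrease (so we remain in $S$, both constraints being inactive), while $F$ decreases because on this arc the roots are $\mu,\mu,\mu^{-2}$ and $F=6(\log\mu)^2$ is strictly decreasing as $\mu\to1$ — a contradiction. (If $a$ has a repeated eigenvalue, $(E_1,E_2)$ already lies on $\partial\Delta$, but it is the cusp only when all $a_i$ are equal, a trivial case; otherwise the same case analysis applies verbatim.) Hence $Z^\ast=(E_1,E_2)$, as desired. I expect this last step — organising the cusped geometry of $\Delta$ and the boundary behaviour of $F$ so that an admissible descent direction can be exhibited on whichever face of $S$ the minimiser sits — to be the genuine obstacle; the implicit differentiation and the divided-difference/convexity input of the middle paragraph are the clean and decisive part, and turning the local positivity $\nabla F>0$ into the stated global inequality over the awkward region $\Delta$ is where the real work lies.
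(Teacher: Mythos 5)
Your proposal is correct, but it takes a genuinely different route from the paper. The paper passes to logarithms, normalizes the sum to zero, argues by contraposition (Theorem \ref{forproof}), and parametrizes the circle $\{\sum z_i=0,\ \sum z_i^2=\mathrm{const}\}$ trigonometrically as $z_k=r\cos(\varphi+2k\pi/3)$; the whole proof then rests on Lemma \ref{lemma1} (monotonicity of $h(r,\varphi)$ in $\varphi$, giving the sharp equivalence $e^a+e^b+e^c\le e^x+e^y+e^z\Leftrightarrow a\le x$) together with monotonicity of $h$ in $r$ via Chebyshev's sum inequality, applied to the rescaled triple $t_i=kz_i$. You instead work in the coefficient space $(e_1,e_2)$ with $e_3=1$, compute $\nabla F$ by implicit differentiation of $p(y_i)=0$, recognize the resulting sums as second divided differences of $t\log t$ and $\log t$, and invoke the mean value theorem for divided differences to get $\partial F/\partial e_1=1/\xi>0$, $\partial F/\partial e_2=1/\eta^2>0$; you then convert this local monotonicity into the global statement by a constrained-minimization argument over $S=\Delta\cap\{e_1\ge E_1,\,e_2\ge E_2\}$, using the explicit boundary parametrization (roots $\mu,\mu,\mu^{-2}$, so $F=6(\log\mu)^2$ on $\partial\Delta$). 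Your gradient computation is arguably the more conceptual explanation of why the inequality holds (convexity of $t\log t$ and concavity of $\log t$ are exactly what is needed), and it plausibly generalizes; what the paper's route buys is the stronger Lemma \ref{lemma1}, from which the equality characterization (Remark \ref{remark4}) and the non-majorization discussion fall out for free, while your route must do extra work to tame the non-convex region $\Delta$.

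Two details you assert rather than prove, neither fatal. First, the claim that the slice $\Delta\cap\{e_2=E_2\}$ is an interval: this does follow from your own description of $\partial\Delta$, since along each of the two boundary arcs $e_1=2\mu+\mu^{-2}$, $e_2=\mu^2+2\mu^{-1}$ both coordinates are strictly monotone in the distance from the cusp, so the line $\{e_2=E_2\}$ meets $\partial\Delta$ in exactly two points and the compact slice, whose relative boundary lies in those two points, is a closed interval; this should be written out. Second, the remark about $C^1$-regularity of $F$ ``across the discriminant locus'' is superfluous and slightly misleading: with $e_3=1$ fixed, every point of $\mathrm{int}\,\Delta$ has three distinct roots (a double root forces a sign change of the discriminant, hence lies on $\partial\Delta$), so the implicit function theorem and the distinct-node divided-difference formula apply everywhere you actually differentiate, and on the boundary you only ever use continuity of $F$ plus the explicit formula $F=6(\log\mu)^2$.
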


The conditions \eqref{2neff} are also simple expressions in terms of arithmetic, harmonic and geometric and quadratic mean
\begin{align*}
 A(y_1,y_2,y_3)=\frac{y_1+y_2+y_3}{3},\;\;H(y_1,y_2,y_3)=\frac{3}{\frac{1}{y_1}+\frac{1}{y_2}+\frac{1}{y_3}},\\
 G(y_1,y_2,y_3)=\sqrt[3]{y_1y_2y_3},\;\; Q(y_1,y_2,y_3)=\sqrt{\frac{1}{3} (y_1^2+y_2^2+y_3^2)}
\end{align*}
\begin{formulation}\label{mittel}
Let $a_1,a_2,a_3>0$ and $y_1,y_2,y_3>0$. Then
$A(y_1,y_2,y_3)\geq A(a_1,a_2,a_3)$, $H(a_1,a_2,a_3)\geq H(y_1,y_2,y_3)$ (``reverse!'') and $G(y_1,y_2,y_3)=G(a_1,a_2,a_3)$ imply
\[
    Q(\log y_1,\log y_2,\log y_3)\geq Q(\log a_1,\log a_2, \log a_3)
\]
\end{formulation}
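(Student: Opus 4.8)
\textit{Proof proposal.} By the equivalences established above it suffices to prove Theorem~\ref{ohnequadrat}, and I would phrase that as a minimisation problem: with $a=(a_1,a_2,a_3)$ fixed, show that $a$ minimises
\[
 F(y):=(\log y_1)^2+(\log y_2)^2+(\log y_3)^2
\]
over the closed set $K:=\{\,y\in(0,\infty)^3:e_1(y)\ge e_1(a),\ e_2(y)\ge e_2(a),\ e_3(y)=e_3(a)\,\}$, which contains $a$. The plan is to determine the global minimiser of $F$ on $K$ by a Lagrange/KKT analysis and to show it can only be $a$, up to permutation of components; the one nontrivial ingredient will be a monotonicity lemma for roots of cubics with prescribed $e_1$ and $e_3$.

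First, $F$ is coercive on $K$ (if $\max_i y_i\to\infty$ or $\min_i y_i\to0$ then $F(y)\to\infty$), so the sublevel set $\{y\in K:F(y)\le F(a)\}$ is compact and contained in the open orthant, and $F$ attains its minimum on $K$ at some $y^\ast$. I would then write the KKT conditions at $y^\ast$ for the active constraints — always including $e_3(y)=e_3(a)$, using $\partial_{y_i}e_3=e_3(a)/y_i^\ast$ and $\partial_{y_i}F=2\log y_i^\ast/y_i^\ast$ — and split into cases. If no inequality is active, the multiplier equation forces all $y_i^\ast$ equal, hence $y^\ast=e_3(a)^{1/3}(1,1,1)$; but then $e_1(y^\ast)=3\,e_3(a)^{1/3}\le e_1(a)$ by AM--GM, so feasibility forces equality in AM--GM, i.e.\ $a=y^\ast$. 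If the $e_1$-constraint is active, the multiplier equation reads $2\log y_i^\ast=\lambda\,y_i^\ast+c$ for all $i$ with $\lambda\ge0$; since $y\mapsto 2\log y-\lambda y-c$ is either increasing or increasing-then-decreasing on $(0,\infty)$, it has at most two zeros, so at least two of the $y_i^\ast$ agree and $y^\ast=(p,p,q)$. The case in which instead the $e_2$-constraint is active reduces to this one by the involution $y_i\mapsto1/y_i$, $a_i\mapsto1/a_i$, which fixes $F$ and, in the form of Theorem~\ref{neff_log_formulation}, exchanges the two inequality constraints. Finally, if \emph{both} inequalities are active, then $y^\ast$ and $a$ share all of $e_1,e_2,e_3$, hence the same characteristic polynomial, so $y^\ast=a$ up to permutation.

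The only case left to exclude is a genuine double-root minimiser $y^\ast=(p,p,q)$ with $p\ne q$ (the all-equal sub-case falls under the AM--GM argument above), satisfying $2p+q=e_1(a)$, $p^2q=e_3(a)$ and $p^2+2pq\ge e_2(a)$. Here I would invoke the lemma: \emph{for fixed $E_1,E_3>0$, the quantity $\Phi(E):=\sum_i(\log r_i)^2$, where $r_1\le r_2\le r_3$ are the positive roots of $t^3-E_1t^2+E\,t-E_3$, is strictly increasing in $E$ throughout the (relatively open) range for which three positive real roots exist.} Granting this, both $a$ and $(p,p,q)$ are such root triples, and $(p,p,q)$, carrying a double root, sits at an endpoint of the admissible $E$-interval — whose two endpoints are exactly the double-root triples, i.e.\ given by the positive solutions $r$ of $2r^3-E_1r^2+E_3=0$ (exactly two of them, unless $a_1=a_2=a_3$). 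If $p^2+2pq=e_2(a)$, then $a=(p,p,q)$ up to permutation; if $(p,p,q)$ is the left endpoint, then $e_2(a)\ge p^2+2pq$ forces equality and again $a=(p,p,q)$; and if $(p,p,q)$ is the right endpoint with $e_2(a)<p^2+2pq$, the lemma yields $F(p,p,q)=\Phi(p^2+2pq)>\Phi(e_2(a))=F(a)$, contradicting minimality of $y^\ast$ since $a\in K$. In every case $y^\ast=a$ up to permutation, i.e.\ $F\ge F(a)$ on $K$; the case $n=2$ is analogous (Remark~\ref{remark6}).

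It remains to prove the lemma, which I expect to be the crux. On the admissible open set the three roots are distinct and depend smoothly on $E$; differentiating $r_i^3-E_1r_i^2+E\,r_i-E_3=0$ in $E$ gives $dr_i/dE=-r_i/P'(r_i)$ with $P(t)=\prod_j(t-r_j)$, so that $P'(r_i)=\prod_{j\ne i}(r_i-r_j)$ and
\[
 \frac{d\Phi}{dE}=\sum_i\frac{2\log r_i}{r_i}\cdot\frac{dr_i}{dE}=-2\sum_i\frac{\log r_i}{\prod_{j\ne i}(r_i-r_j)}.
\]
The last sum is the second divided difference of $\log$ at $r_1,r_2,r_3$, which by the mean-value form of divided differences equals $\tfrac12 f''(\xi)=-\tfrac{1}{2\xi^2}$ with $f=\log$ and some $\xi\in(r_1,r_3)$; hence $d\Phi/dE=\xi^{-2}>0$. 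The concavity of $\log$, entering through a second divided difference, is exactly the point where the squared-logarithm structure is used; the remaining items — coercivity, the KKT case bookkeeping, and the routine facts that the three-positive-roots set is an interval and that its endpoints are the double-root triples — I expect to be tedious but unproblematic.
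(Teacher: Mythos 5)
Your proposal is correct, and it is a genuinely different proof from the one in the paper. The paper passes to the exponential formulation, takes the contrapositive (Theorem \ref{forproof}) and parametrizes triples with fixed sum $0$ and fixed sum of squares by $h(r,\varphi)=e^{r\cos\varphi}+e^{r\cos(\varphi+2\pi/3)}+e^{r\cos(\varphi-2\pi/3)}$; its crux is Lemma \ref{lemma1} (monotonicity of $h$ in $\varphi$, proved via an auxiliary function shown to be decreasing in $r$ from the limit $0$), combined with monotonicity of $h$ in $r$ (Chebyshev's sum inequality) and the scaling $t_i=kz_i$. You instead stay in the algebraic variables, recast the claim as ``$a$ globally minimizes $\sum_i(\log y_i)^2$ on $K$'', prune the candidate minimizers by KKT to double-root triples, and close with the lemma that, for fixed $e_1,e_3$, the quantity $\Phi(E)=\sum_i(\log r_i)^2$ is strictly increasing in $E=e_2$; your computation
\[
\frac{d\Phi}{dE}\;=\;-2\sum_i\frac{\log r_i}{\prod_{j\ne i}(r_i-r_j)}\;=\;-(\log)''(\xi)\;=\;\frac{1}{\xi^2}\;>\;0
\]
is correct (mean-value form of the second divided difference) and pinpoints that only the concavity of $\log$ enters --- a formulation with a real chance of extending to the conjecture for general $n$, unlike the explicit $2\pi/3$-parametrization. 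What the paper's route buys is self-containedness: no compactness argument, no constraint qualification, no facts about the root locus. Your deferred items do all check out, but two should be made explicit in a final write-up: (i) the constraint qualification for the active gradients among $\nabla e_1,\nabla e_2,\nabla e_3$ can only fail where the Jacobian determinant $\pm\prod_{i<j}(y_i-y_j)$ degenerates in the relevant way, i.e.\ at constant triples, and that point must be --- and is --- disposed of by your AM--GM step \emph{before} KKT is invoked; (ii) that the admissible set of $E$ is a single interval whose endpoints are the two double-root triples follows, e.g., from convexity of $e_1$ in the variables $\log y_i$ restricted to the plane $\sum_i\log y_i=\log e_3(a)$, so the root locus is a closed curve on which $e_2$ attains its extrema exactly at the repeated-root points.
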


We denote by
$$a_i=:d_i^2\,,\qquad y_i=:x_i^2.$$
and arrive at

\begin{formulation}\label{mitquadraten}
Let the real numbers $d_i$ and $x_i$ be such that $d_1,d_2,d_3>0$, $x_1,x_2,x_3>0$ and
\begin{equation}\label{1}
    \begin{array}{l}
      x_1^2 +x_2^2+x_3^2\geq  d_1^2 +d_2^2+d_3^2,\vspace{4pt}\\
      x_1^2x_2^2 +x_2^2x_3^2+x_1^2x_3^2\geq  d_1^2d_2^2 +d_2^2d_3^2+d_1^2d_3^2,\vspace{4pt}\\
      x_1x_2x_3\,=\, d_1d_2d_3.
    \end{array}
\end{equation}
Then
\begin{equation}\label{1bis}
    (\log x_1)^2 + (\log x_2)^2 + (\log x_3)^2 \geq (\log d_1)^2 + (\log d_2)^2 + (\log d_3)^2.
\end{equation}
\end{formulation}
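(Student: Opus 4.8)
\emph{The plan is as follows.} All the formulations in this section being equivalent, it suffices to establish the inequality in the form of Theorem~\ref{ohnequadrat} (the case $n=2$ of Theorem~\ref{charpol} being analogous and simpler). Fix $a=(a_1,a_2,a_3)$ and regard the assertion as a constrained minimisation problem: one must show that
\[
 f(y):=(\log y_1)^2+(\log y_2)^2+(\log y_3)^2
\]
does not drop below $f(a)$ on the feasible set $K:=\{\,y\in(0,\infty)^3:\ e_1(y)\ge e_1(a),\ e_2(y)\ge e_2(a),\ e_3(y)=e_3(a)\,\}$. Since $(\log t)^2\to+\infty$ both as $t\to0^+$ and as $t\to+\infty$, the function $f$ is coercive on $K$, so $\{y\in K: f(y)\le f(a)\}$ is compact and nonempty and $f$ attains a minimum on $K$ at some $y^*$. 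As $a\in K$ we automatically have $f(y^*)\le f(a)$, so the theorem is equivalent to the single reverse inequality $f(y^*)\ge f(a)$, i.e.\ to the claim that the minimum over $K$ is already realised at $a$ itself.

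I would then split into two cases according to the multiplicities of $y_1^*,y_2^*,y_3^*$. Suppose first they are pairwise distinct. At $y^*$ the Karush--Kuhn--Tucker conditions hold (the constraint qualification is easily checked; alternatively one uses the Fritz--John conditions and notes that a vanishing cost multiplier would force the $y_i^*$ to be roots of one and the same quadratic, hence to take at most two distinct values). Multiplying the stationarity relation by $y_i^*$ and using $y_i^*\,\partial_i e_2(y^*)=e_2(y^*)-e_3(y^*)/y_i^*$, it takes the form
\[
 2\log y_i^*=\alpha\, y_i^*-\beta/y_i^*+\gamma\qquad(i=1,2,3)
\]
with constants $\alpha\ge0$, $\beta\ge0$ (the rescaled multipliers of the two inequality constraints) and $\gamma\in\R$. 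If $\beta=0$ then $y\mapsto 2\log y-\alpha y-\gamma$ is strictly concave, and if $\alpha=0$ then $y\mapsto 2y\log y-\gamma y+\beta$ is strictly convex; either way the relevant equation has at most two positive solutions, contradicting distinctness. Hence $\alpha>0$ \emph{and} $\beta>0$, so by complementary slackness both inequality constraints are active: $e_1(y^*)=e_1(a)$ and $e_2(y^*)=e_2(a)$. Together with $e_3(y^*)=e_3(a)$ this means that $\operatorname{diag}(y^*)$ and $\operatorname{diag}(a)$ have the same characteristic polynomial, so $y^*$ is a permutation of $a$ and $f(y^*)=f(a)$.

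It remains to handle the case in which $y^*$ has at most two distinct values, $y^*=(p,p,q)$ with $p^2q=e_3(a)$ (the degenerate case $p=q$ comes out of the same computation). Passing to logarithms and subtracting the common mean $m:=\tfrac13\log e_3(a)$, one gets, with $\tau:=\log p-m$, the identity $f(y^*)-3m^2=6\tau^2$, while $f(a)-3m^2=R^2$ where $R^2:=\sum_i(\log a_i-m)^2$. The key elementary lemma is that a strictly convex symmetric function on the circle $\{v\in\R^3:\ v_1+v_2+v_3=0,\ v_1^2+v_2^2+v_3^2=R^2\}$ attains its minimum at a point with two equal coordinates; since for $c:=R/\sqrt6$ the two such points $(c,c,-2c)$ and $(-c,-c,2c)$ give values $2e^{c}+e^{-2c}$ and $2e^{-c}+e^{2c}$, and the latter exceeds the former for $c>0$ (their difference vanishes at $0$ and has derivative $4(\cosh 2c-\cosh c)\ge0$), we obtain
\[
 \sum_i e^{v_i}\ \ge\ 2e^{c}+e^{-2c}\qquad\text{and}\qquad \sum_i e^{-v_i}\ \ge\ 2e^{c}+e^{-2c}
\]
for every such $v$, in particular for the centred logarithms of $a$. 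Writing $G(x):=2e^{x}+e^{-2x}$, the feasibility relations $e_1(y^*)\ge e_1(a)$ and $e_2(y^*)\ge e_2(a)$ (the second being $\sum_i 1/y_i^*\ge\sum_i 1/a_i$, since $e_3(y^*)=e_3(a)$) become respectively $G(\tau)\ge G(c)$ and $G(-\tau)\ge G(c)$. As $G$ is strictly decreasing on $(-\infty,0]$ and strictly increasing on $[0,\infty)$ with $\min G=G(0)=3$, the first inequality gives $\tau\ge c$ when $\tau\ge0$, and the second gives $-\tau\ge c$ when $\tau<0$; in all cases $|\tau|\ge c$, hence $f(y^*)=6\tau^2+3m^2\ge 6c^2+3m^2=R^2+3m^2=f(a)$, which finishes the proof.

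The step I expect to be the real obstacle is the elimination of the generic configuration of three distinct eigenvalues: making the Lagrange/Fritz--John bookkeeping clean (the constraint qualification, the non-degeneracy of the cost multiplier) and then reading off from the precise shape of the stationarity equation $2\log y=\alpha y-\beta/y+\gamma$ that it has at most two positive roots whenever one of $\alpha,\beta$ vanishes. The two-variable endgame is, by comparison, a short convexity argument once the circle lemma is available; and that lemma, while entirely elementary, does require the small computation that among the two ``two-equal-coordinate'' points of the circle it is the one with the large \emph{negative} entry that realises the smaller value of $\sum_i e^{v_i}$.
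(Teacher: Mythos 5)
Your argument is correct, but it takes a genuinely different route from the paper's. The paper passes to the contrapositive (Theorem \ref{forproof}), rescales $z$ to a point $t=kz$ lying on the same ``circle'' $\{\sum v_i=0,\ \sum v_i^2=\text{const}\}$ as $c$, and then relies on two monotonicity facts for $h(r,\varphi)=\sum_k e^{r\cos(\varphi+2k\pi/3)}$: decrease in the angular variable (Lemma \ref{lemma1}, proved by a fairly delicate differentiation of $F(r,\varphi)$) and increase in the radial variable (via Chebyshev's sum inequality). You instead set the problem up variationally: a minimizer $y^*$ of $\sum(\log y_i)^2$ on the feasible set exists by coercivity; the Fritz--John/KKT bookkeeping does eliminate the case of three distinct coordinates (if the cost multiplier vanished, the $y_i^*$ would be roots of one nonzero quadratic; and the multiplier equation $2\log y=\alpha y-\beta/y+\gamma$ has at most two positive roots when $\beta=0$ or $\alpha=0$, by strict concavity of $2\log y-\alpha y-\gamma$ resp.\ strict convexity of $2y\log y-\gamma y+\beta$), so both inequality constraints are active and $y^*$ is a permutation of $a$; the remaining two-equal-coordinates case is settled by the observation that $\min\sum e^{\pm v_i}$ over the circle of radius $R$ equals $2e^{c}+e^{-2c}$ with $c=R/\sqrt6$, which forces $|\tau|\ge c$ and hence $6\tau^2\ge R^2$. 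I checked the individual steps (coercivity, the identity $y_i\partial_i e_2=e_2-e_3/y_i$, the root counts, the comparison $G(-c)\ge G(c)$ via the derivative $4(\cosh 2c-\cosh c)\ge 0$, and the monotonicity of $G$ on each half-line) and they all hold. What your approach buys is that you only need the endpoint comparison of the two critical configurations $(c,c,-2c)$ and $(-c,-c,2c)$ on the circle, not the full angular monotonicity of Lemma \ref{lemma1}; what the paper's approach buys is the sharper statement of Lemma \ref{lemma1} (equivalence with $a\le x$), which yields the equality characterization of Remark \ref{remark4} essentially for free. One small point of phrasing: your ``circle lemma'' should be stated not for an arbitrary strictly convex symmetric function but for separable sums $\sum\phi(v_i)$ whose derivative $\phi'$ meets every affine function in at most two points (true for $\phi(t)=e^{\pm t}$, whose derivatives are strictly convex resp.\ concave); since you only apply it to these two functions, this is cosmetic rather than a gap.
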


If we again view $x_i$ and $d_i$ as eigenvalues of positive definite matrices, an equivalent formulation of the problem can be given in terms of their Frobenius matrix norms:

\begin{formulation}\label{frobeniusnorm}
For $n\in\{2,3\}$ let $P_1, P_2 \in \R^{n\times n}$ be positive definite real matrices.
Let
\[
 \norm{P_1}_F^2 \geq \norm{P_2}_F^2 \text{ and } \norm{P_1\inv}_F^2 \geq \norm{P_2\inv}_F^2, \text{ and } \det P_1=\det P_2.
\]
Then
\[
 \norm{\log{P_1}}_F^2\geq \norm{\log{P_2}}_F^2.
\]
\end{formulation}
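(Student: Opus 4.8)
\emph{Proof plan.} Since Theorems~\ref{charpol}--\ref{frobeniusnorm} are equivalent, I would prove the eigenvalue version, Theorem~\ref{ohnequadrat}, for $n=3$ (the case $n=2$ being analogous and simpler; cf.\ Remark~\ref{remark6}). Put $P:=e_1(a)$, $S:=e_2(a)$, $D:=e_3(a)$, and view the statement as the optimisation problem of minimising $f(y):=(\log y_1)^2+(\log y_2)^2+(\log y_3)^2$ over $K:=\{y\in(0,\infty)^3:e_1(y)\ge P,\ e_2(y)\ge S,\ e_3(y)=D\}$; since $a\in K$, the inequality \eqref{2bis} is exactly the assertion $\min_K f=f(a)$. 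Two preliminary reductions: by the arithmetic--geometric-mean and Maclaurin inequalities $P\ge 3D^{1/3}$ and $S\ge 3D^{2/3}$, equality in either forcing $a_1=a_2=a_3$ (trivial case), so assume both strict; and since $f(y)\le f(a)$ confines all $\log y_i$ to a bounded interval, $K\cap\{f\le f(a)\}$ is compact, so the minimum is attained, at $y^\ast$ say, and it remains to show $f(y^\ast)=f(a)$.

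The crucial computation is how $f$ moves along the surface $\Sigma:=\{e_3=D\}$. There the unordered triple of eigenvalues is a function of $(e_1,e_2)$, so near points with simple eigenvalues $f$ equals a smooth function $\hat f(e_1,e_2)$; differentiating $y_i^3-e_1y_i^2+e_2y_i-D=0$ in $e_1$ and $e_2$ and inserting into $\partial_{e_j}\hat f=\sum_i\frac{2\log y_i}{y_i}\partial_{e_j}y_i$ exhibits $\partial_{e_1}\hat f$ and $\partial_{e_2}\hat f$ as twice, resp.\ minus twice, the second divided differences of $t\mapsto t\log t$ and $t\mapsto\log t$ at $y_1,y_2,y_3$; by the mean value theorem for divided differences these equal $1/\xi_1>0$ and $1/\xi_2^2>0$ for some $\xi_1,\xi_2\in(\min_i y_i,\max_i y_i)$. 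So $\hat f$ is strictly increasing in each of $e_1,e_2$ wherever the eigenvalues are distinct. Hence if $y^\ast$ has three distinct eigenvalues, $f$ could be lowered by decreasing $e_1$ (if $e_1(y^\ast)>P$) or $e_2$ (if $e_2(y^\ast)>S$) inside $K$, so $e_1(y^\ast)=P$ and $e_2(y^\ast)=S$; then $y^\ast$ and $a$ share their characteristic polynomial and $f(y^\ast)=f(a)$.

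If instead $y^\ast=(s,s,D/s^2)$ has a repeated eigenvalue, then along this one-parameter family $f=6(\log s)^2-4(\log D)(\log s)+(\log D)^2$ is a parabola in $\log s$ minimised at $s_0:=D^{1/3}$, while $e_1=2s+D/s^2$ and $e_2=s^2+2D/s$ are $U$-shaped in $s$ with the same minimiser; since $P>3D^{1/3}$, $S>3D^{2/3}$, the feasible set $\{s:e_1\ge P,\ e_2\ge S\}$ is $(0,s_L]\cup[s_R,\infty)$ with $s_L<s_0<s_R$, on which $f$ is monotone, hence minimised at $s_L$ or $s_R$. At such a boundary point one constraint is active; using the self-duality $y_i\mapsto1/y_i$, $a_i\mapsto1/a_i$ (which swaps the $e_1$- and $e_2$-constraints, fixes $e_3$, and preserves $f$) I may assume the $e_1$-constraint is active, so this point $\bar y$ lies with $a$ on $\mathcal C:=\{e_1=P,\ e_3=D\}$. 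Now $\mathcal C$ is a smooth simple closed curve (a level set of the log-concave map $y_1y_2y_3$ on the open triangle $\{\sum y_i=P\}$); its two points with a repeated eigenvalue are exactly the critical points of both $f|_{\mathcal C}$ and $e_2|_{\mathcal C}$ (at such a point every gradient lies in the plane where the two coinciding coordinates agree, which is spanned by $\nabla e_1,\nabla e_3$; conversely the Lagrange conditions force an affine, resp.\ quadratic, relation among the $y_i$), hence realise the extrema of each; and by the previous paragraph $f$ increases with $e_2$ along $\mathcal C$, so the repeated-eigenvalue point of larger $e_2$ is that of larger $f$. Since $\bar y\in K$ has $e_2(\bar y)\ge S=e_2(a)$, with $a$ on $\mathcal C$ between — or equal to one of — these two points, $\bar y$ is the larger one and $f(\bar y)\ge f(a)$. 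In all cases $f(y^\ast)\ge f(a)$, so $\min_K f=f(a)$.

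The step I expect to be the real obstacle is this last, repeated-eigenvalue case: there the chart $(e_1,e_2)$ on $\Sigma$ degenerates, so one cannot simply descend $f$ and must instead use the explicit family together with the global geometry of $\mathcal C$. Assembling the ingredients — the signs in the divided-difference identities, the reciprocal self-duality, the enumeration of critical points on $\mathcal C$, and the degenerate sub-cases (AM--GM tight, or $a$ itself with a repeated eigenvalue) — is where the work concentrates, though each piece is elementary.
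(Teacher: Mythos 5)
Your proposal is correct in substance, and it follows a genuinely different route from the paper's. The paper passes to logarithmic variables $z_i=\log y_i$ normalised to $z_1+z_2+z_3=0$, argues by contraposition (Theorem \ref{forproof}), and rests on a trigonometric parametrisation of the circle $\{\sum z_i=0,\ \sum z_i^2=\mathrm{const}\}$: Lemma \ref{lemma1} shows $\sum e^{z_i}$ is monotone in the angular variable $\varphi$ (via the sign of an auxiliary function $F(r,\varphi)$ that decreases in $r$ from the limit $0$), and the scaling step $z\mapsto kz$ uses Chebyshev's sum inequality to show monotonicity in the radius. You instead stay in the original variables and treat the claim as constrained minimisation of $f(y)=\sum(\log y_i)^2$ on $\{e_3=D\}$; your key identity, that $\partial_{e_1}\hat f$ and $\partial_{e_2}\hat f$ are second divided differences of $t\log t$ and $-\log t$, hence equal $1/\xi_1$ and $1/\xi_2^2$, is correct (I checked $\partial_{e_1}y_i=y_i^2/p'(y_i)$, $\partial_{e_2}y_i=-y_i/p'(y_i)$ with $p'(y_i)=\prod_{j\neq i}(y_i-y_j)$) and is arguably the most transparent explanation of \emph{why} the inequality holds: the squared-log sum is monotone in the first two coefficients of the characteristic polynomial at fixed determinant. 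The remaining pieces — compactness of $K\cap\{f\le f(a)\}$, first-order descent forcing both constraints active at a simple-spectrum minimiser, the explicit parabola/U-shape computation on the double-eigenvalue family, the reciprocal self-duality swapping the two constraints, and the Lagrange-multiplier count showing the extrema of $e_2$ on $\mathcal C=\{e_1=P,e_3=D\}$ sit exactly at the two repeated-eigenvalue points — all check out (for $f|_{\mathcal C}$ the multiplier relation $2\log y_i=\lambda y_i+\mu D$ is not ``affine'' but, being a strictly concave equation in $y_i$, still admits at most two roots, which is all you need). What each approach buys: the paper's proof is self-contained and elementary but somewhat opaque; yours requires more bookkeeping near the degenerate (repeated-eigenvalue) locus, but exposes the monotone dependence of $\|\log P\|_F^2$ on $(\tr P,\tr\Cof P)$ and, since the divided-difference signs come from $(t\log t)''>0$ and $(\log t)''<0$ for any number of nodes, it points more naturally toward the conjecture for general $n$ than the $n=3$-specific trigonometric parametrisation does.
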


Let us reconsider the formulation from Theorem \ref{mittel}. If we denote
$$c_i:=\log a_i\,,\qquad z_i:=\log y_i\,,$$
from $H(a_1,a_2,a_3)\geq H(y_1,y_2,y_3)$ we obtain
$$ e^{-z_1}   + e^{-z_2} + e^{-z_3} \geq  e^{-c_1}   + e^{-c_2} + e^{-c_3}.$$

\begin{formulation}\label{exp}\label{P2}
Let the real numbers $c_1, c_2, c_3 $ and $z_1, z_2, z_3 $ be such that
\begin{equation}
   \label{3}
    \begin{array}{l}
      e^{z_1}  +e^{z_2} +e^{z_3} \geq   e^{c_1}  +e^{c_2} +e^{c_3} ,\vspace{4pt}\\
      e^{-z_1}   + e^{-z_2} + e^{-z_3} \geq  e^{-c_1}   + e^{-c_2} + e^{-c_3},\vspace{4pt}\\
      z_1+z_2+z_3= c_1+c_2+c_3.
    \end{array}
\end{equation}
Then
\begin{equation}\label{3bis}
    z_1^2+z_2^2+z_3^2\geq c_1^2+c_2^2+c_3^2\,.
\end{equation}
\end{formulation}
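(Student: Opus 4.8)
The plan is to establish Theorem~\ref{exp} (which, by the equivalences above, settles every formulation) in three parts: a translation normalisation, a two-fold reduction -- by compactness and Lagrange multipliers -- to the situation in which \emph{both} triples $z$ and $c$ have a repeated entry, and an elementary one-variable estimate that disposes of that situation. First, the hypotheses \eqref{3} and the conclusion \eqref{3bis} are invariant under $z_i\mapsto z_i+t$, $c_i\mapsto c_i+t$ for a common $t\in\R$: the first two inequalities in \eqref{3} merely acquire the positive factors $e^{t}$ and $e^{-t}$, and, because $\sum z_i=\sum c_i$, the difference of the two sides of \eqref{3bis} is unchanged. So we may assume $\sum z_i=\sum c_i=0$ and work on the plane $H:=\{w\in\R^3: w_1+w_2+w_3=0\}$. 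Write $g_{+}(w):=\sum_i e^{w_i}$, $g_{-}(w):=\sum_i e^{-w_i}$, $\Phi(w):=\sum_i w_i^{2}$; by AM--GM one has $g_{\pm}\geq 3$ on $H$ with equality only at the origin, and $\Phi$ restricts to a positive definite quadratic form on $H$, hence is coercive and has no interior local maximum.

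\textbf{The reduction.} Fix $c\in H\setminus\{0\}$ (the case $c=0$ is trivial) and let $z^{*}$ minimise $\Phi$ over the closed set $S_c:=\{w\in H: g_{+}(w)\geq g_{+}(c),\ g_{-}(w)\geq g_{-}(c)\}$. As $g_{+}(c)>3$, the origin is not in $S_c$, so $z^{*}\neq 0$ and at least one constraint is active at $z^{*}$. If both are active, then $g_{\pm}(z^{*})=g_{\pm}(c)$ together with $\prod_i e^{z^{*}_i}=1=\prod_i e^{c_i}$ show that $(e^{z^{*}_i})_i$ and $(e^{c_i})_i$ have the same three elementary symmetric functions, hence coincide up to a permutation and $\Phi(z^{*})=\Phi(c)$. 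If only $g_{+}$ is active (the $g_{-}$-case is symmetric), the Lagrange condition $2z^{*}_i=\lambda e^{z^{*}_i}+\nu$ holds for $i=1,2,3$; since $t\mapsto 2t-\lambda e^{t}$ is strictly monotone or strictly unimodal, it attains the value $\nu$ at most twice, so two of the $z^{*}_i$ coincide. Hence $\min_{S_c}\Phi$ is attained either at a permutation of $c$ or at a point with a repeated entry, and \eqref{3bis} will follow for all admissible $z$ once it is known whenever $z$ itself has a repeated entry. Now fix such a $z$ and let $c\in H$ be admissible, i.e.\ $c\in T_z:=\{w\in H:g_{+}(w)\leq g_{+}(z),\ g_{-}(w)\leq g_{-}(z)\}$, which is compact. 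Maximising $\Phi$ over $T_z$ and running the same analysis -- no interior maximum; on each active face $g_{\pm}=\text{const}$ the Lagrange condition forces two components to agree; at the corner $c$ is a permutation of $z$ -- the maximum is attained either at a permutation of $z$, so $\Phi(c)\leq\Phi(z)$ at once, or at some $c^{*}\in T_z$ with a repeated entry; in the latter case $z$ and $c^{*}$ both have a repeated entry, satisfy \eqref{3}, and it suffices to prove the inequality there.

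\textbf{The one-variable core.} After the normalisation we may write $z=(u,u,-2u)$ and $c^{*}=(p,p,-2p)$, so $\Phi(z)=6u^{2}$, $\Phi(c^{*})=6p^{2}$, and with $G(t):=2e^{t}+e^{-2t}$ the hypotheses become $G(u)\geq G(p)$ and $G(-u)\geq G(-p)$, the goal being $|u|\geq|p|$. Since $G'(t)=2(e^{t}-e^{-2t})$ vanishes only at $t=0$, $G$ is strictly decreasing on $(-\infty,0]$ and strictly increasing on $[0,\infty)$. By the symmetry $(u,p)\leftrightarrow(-u,-p)$ we may assume $u\geq0$. Then $p>u$ would give $G(p)>G(u)$, contradicting the first hypothesis; and $p<-u$ would give both $G(-u)<G(p)\leq G(u)$ (since $G$ is decreasing on $[\,p,-u\,]\subset(-\infty,0]$) and $G(u)<G(-p)\leq G(-u)$ (since $G$ is increasing on $[\,u,-p\,]\subset[0,\infty)$), which is impossible. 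Hence $-u\leq p\leq u$, i.e.\ $|p|\leq|u|$, which completes the proof.

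\textbf{Main obstacle.} I expect the bookkeeping in the reduction to be the delicate part: one must ensure the relevant extremisers exist, that the Lagrange/KKT stationarity conditions are genuinely available at them (the gradients $\nabla g_{+}$, $\nabla g_{-}$ and $(1,1,1)$ are pairwise independent on $H$ away from the origin, which is excluded in both reductions), and that in every boundary configuration -- including those where two components already coincide or a multiplier vanishes -- the conclusion ``two components are equal'' really does follow. Once the reduction is in place the one-variable estimate is short.
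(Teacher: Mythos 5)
Your argument is correct, and it reaches the conclusion by a genuinely different route than the paper. The paper works with the contrapositive (Theorem \ref{forproof}): it parametrizes the zero-sum plane as $\bigl(r\cos\varphi,\,r\cos(\varphi+2\pi/3),\,r\cos(\varphi-2\pi/3)\bigr)$ and proves the sharp Lemma \ref{lemma1} --- on each circle $\{\sum w_i=0,\ \sum w_i^2=\text{const}\}$ the sum $\sum e^{w_i}$ is a monotone function of the largest entry --- and then combines the resulting dichotomy (Consequence \ref{cons2}) with monotonicity of the same sum in the radius $r$ (proved via Chebyshev's sum inequality) after rescaling $z$ onto the sphere of $c$. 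You instead extremize $\sum w_i^2$ under the exponential constraints and let the stationarity condition $2w_i=\lambda e^{w_i}+\nu$ do the work: since $t\mapsto 2t-\lambda e^{t}$ takes each value at most twice, every extremizer is either a permutation of the other triple (both constraints active, hence equal elementary symmetric functions of the $e^{w_i}$) or has a repeated entry, and after two such reductions only the case $(u,u,-2u)$ versus $(p,p,-2p)$ remains, which your analysis of $G(t)=2e^{t}+e^{-2t}$ disposes of. Each approach buys something: the paper's lemma is an equivalence, which is what yields the equality characterization of Remark \ref{remark4} and Consequence \ref{cons3}, and it needs no constraint-qualification bookkeeping; your variational reduction avoids the trigonometric computation and the Chebyshev step entirely and is of the ``equal variables'' type that one would naturally try on the $n$-dimensional conjecture of Section 5, since the same unimodality count gives at most two distinct entries at an extremizer for every $n$. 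The two points you flag at the end do each need a sentence in a full write-up, but both are easily supplied: if the maximizer of $\Phi$ over $T_z$ were the origin then $T_z=\{0\}$ and there is nothing to prove, so the point at which you invoke the multiplier rule is always nonzero and the gradients $(1,1,1)$ and $(e^{w_1},e^{w_2},e^{w_3})$ are independent there; and when $\lambda\le 0$ the stationarity map is strictly monotone, forcing all three entries equal and hence the excluded origin, so the repeated-entry conclusion survives in every nontrivial branch.
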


In order to prove Theorem \ref{exp}, one can assume without loss of generality that
\begin{equation}\label{3,1}
    z_1+z_2+z_3= c_1+c_2+c_3=0.
\end{equation}
Thus, we have the equivalent formulation
\begin{formulation}\label{exp2}\label{P2}
Let the real numbers $\bar{c}_1, \bar{c}_2, \bar{c}_3 $ and $\bar{z}_1, \bar{z}_2, \bar{z}_3 $ be such that
\begin{equation}\label{3,2}
    \begin{array}{l}
      e^{\bar{z}_1}  +e^{\bar{z}_2} +e^{\bar{z}_3} \geq   e^{\bar{c}_1}  +e^{\bar{c}_2} +e^{\bar{c}_3} ,\vspace{4pt}\\
      e^{-\bar{z}_1}   + e^{-\bar{z}_2} + e^{-\bar{z}_3} \geq  e^{-\bar{c}_1}   + e^{-\bar{c}_2} + e^{-\bar{c}_3},\vspace{4pt}\\
      \bar{z}_1+\bar{z}_2+\bar{z}_3= \bar{c}_1+\bar{c}_2+\bar{c}_3=0.
    \end{array}
\end{equation}
Then
\begin{equation}\label{3,3}
    \bar{z}_1^2+\bar{z}_2^2+\bar{z}_3^2\geq \bar{c}_1^2+\bar{c}_2^2+\bar{c}_3^2\,.
\end{equation}
\end{formulation}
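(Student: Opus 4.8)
\textbf{Proof strategy for Theorem~\ref{exp2}.} The plan is to treat \eqref{3,3} as a constrained-minimization problem. Fix $\bar c$ with $\sum_i\bar c_i=0$; we may assume $\bar c\neq 0$, since otherwise the right-hand side of \eqref{3,3} vanishes. Put $S_1:=\sum_i e^{\bar c_i}$, $S_2:=\sum_i e^{-\bar c_i}$ (both strictly larger than $3$ by strict AM--GM, as $\prod_i e^{\pm\bar c_i}=1$ and $\bar c\neq0$), $\rho^2:=\sum_i\bar c_i^2$, and
\[
 \mathcal M:=\bigl\{\,\bar z\in\R^3:\ \sum_i\bar z_i=0,\ \sum_i e^{\bar z_i}\ge S_1,\ \sum_i e^{-\bar z_i}\ge S_2\,\bigr\}.
\]
With $F(\bar z):=\sum_i\bar z_i^2$ we have $\bar c\in\mathcal M$ and $F(\bar c)=\rho^2$, so \eqref{3,3} is equivalent to $\min_{\mathcal M}F=\rho^2$, and only ``$\min_{\mathcal M}F\ge\rho^2$'' needs proof. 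Since $F$ is coercive on the hyperplane $\{\sum_i\bar z_i=0\}$, this minimum is attained at some $\bar z^\ast\in\mathcal M$, and $\bar z^\ast\neq0$ because $0\notin\mathcal M$ ($S_1>3$). It then suffices to show $F(\bar z^\ast)=\rho^2$, and I would do this by distinguishing which of the two exponential constraints are active at $\bar z^\ast$.

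If \emph{both} are active, i.e.\ $\sum_i e^{\bar z_i^\ast}=S_1$ and $\sum_i e^{-\bar z_i^\ast}=S_2$, put $u_i:=e^{\bar z_i^\ast}$, so that $\prod_i u_i=e^{\sum_i\bar z_i^\ast}=1$; then $e_1(u)=\sum_i u_i=S_1$, $e_3(u)=1$, and, because $\prod_i u_i=1$, also $e_2(u)=\sum_i u_i^{-1}=S_2$. Hence $u_1,u_2,u_3$ are precisely the three roots of $t^3-S_1t^2+S_2t-1$; the identical computation for $(e^{\bar c_i})$ shows these two triples are, as multisets, the roots of one and the same cubic, so $\bar z^\ast$ is a permutation of $\bar c$ and $F(\bar z^\ast)=\rho^2$. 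Both constraints cannot be slack at $\bar z^\ast$ (the first-order conditions on the hyperplane would then force $2\bar z_i^\ast=\lambda$ for all $i$, i.e.\ $\bar z^\ast=0\notin\mathcal M$), so the remaining possibility is that exactly one is active. By the symmetry $\bar z\mapsto-\bar z$, $\bar c\mapsto-\bar c$ (which swaps the two constraints) it is enough to treat $\sum_i e^{-\bar z_i^\ast}=S_2$, $\sum_i e^{\bar z_i^\ast}>S_1$. Since $\bar z^\ast\neq0$, the active gradients $(1,1,1)$ and $\bigl(e^{-\bar z_i^\ast}\bigr)_i$ are linearly independent, so the Karush--Kuhn--Tucker conditions give $\lambda\in\R$ and $\mu\ge0$ with $2\bar z_i^\ast=\lambda-\mu\,e^{-\bar z_i^\ast}$, and $\mu>0$ (else $\bar z^\ast=0$). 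Since $t\mapsto 2t+\mu e^{-t}$ is strictly convex it attains each value at most twice, so at least two of the $\bar z_i^\ast$ coincide, and with $\sum_i\bar z_i^\ast=0$ this yields $\bar z^\ast=(q,q,-2q)$ for some $q\neq0$ (up to order).

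The remaining task --- and the main obstacle, since here optimality alone no longer forces the conclusion --- is to prove $6q^2=F(\bar z^\ast)=\rho^2$. I would argue on the compact curve $\Gamma:=\{\bar z:\sum_i\bar z_i=0,\ \sum_i e^{-\bar z_i}=S_2\}$, a smooth closed curve for $S_2>3$, which contains both $\bar c$ and $\bar z^\ast$. A short computation shows that any critical point on $\Gamma$ of a permutation-symmetric smooth function --- in particular of $F|_\Gamma$ and of $\bigl(\sum_i e^{\bar z_i}\bigr)|_\Gamma$ --- must have two equal coordinates, and there are exactly two such points on $\Gamma$, namely $(\beta,\beta,-2\beta)$ and $(\gamma,\gamma,-2\gamma)$ with $2e^{-u}+e^{2u}=S_2$. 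Using only the monotonicity of $u\mapsto 2e^{u}+e^{-2u}$ and $u\mapsto 2e^{-u}+e^{2u}$ on $(0,\infty)$ and the elementary inequality $2e^{-u}+e^{2u}>2e^{u}+e^{-2u}$ there, one checks --- labelling so that $\abs{\gamma}>\abs{\beta}$ --- that $(\gamma,\gamma,-2\gamma)$ is simultaneously the maximizer of $F|_\Gamma$ (value $6\gamma^2$) and of $\bigl(\sum_i e^{\bar z_i}\bigr)|_\Gamma$, while $(\beta,\beta,-2\beta)$ is the unique minimizer of $\bigl(\sum_i e^{\bar z_i}\bigr)|_\Gamma$. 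Since $\bar z^\ast$ is one of these two points, two sub-cases remain. If $\bar z^\ast=(\gamma,\gamma,-2\gamma)$, then $\rho^2=F(\bar c)\le\max_\Gamma F=6\gamma^2=F(\bar z^\ast)\le F(\bar c)=\rho^2$. If $\bar z^\ast=(\beta,\beta,-2\beta)$, then $\sum_i e^{\bar z_i^\ast}=\min_\Gamma\sum_i e^{\bar z_i}\le\sum_i e^{\bar c_i}=S_1$, while $\bar z^\ast\in\mathcal M$ forces $\sum_i e^{\bar z_i^\ast}\ge S_1$; hence $\bar c$ also minimizes $\sum_i e^{\bar z_i}$ over $\Gamma$, so it coincides with the unique minimizer $\bar z^\ast$. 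Either way $F(\bar z^\ast)=\rho^2$, which gives $\min_{\mathcal M}F=\rho^2$ and hence \eqref{3,3}. Apart from this two-equal-coordinates case and the one-variable monotonicity facts it relies on, the remaining ingredients --- compactness of $\mathcal M\cap\{F\le\rho^2\}$, smoothness of $\Gamma$, and the constraint qualification used above --- are routine and would be checked along the way.
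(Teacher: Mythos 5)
Your argument is correct, but it takes a genuinely different route from the paper. The paper proves the contrapositive (Theorem~\ref{forproof}): assuming $\sum_i\bar z_i^2<\sum_i\bar c_i^2$, it rescales $\bar z$ radially to the sphere $\{\sum_i t_i=0,\ \sum_i t_i^2=\sum_i\bar c_i^2\}$ and then relies on Lemma~\ref{lemma1}, which parametrizes that circle by an angle $\varphi$ and shows $h(r,\varphi)=\sum_i e^{r\cos(\varphi+2\pi i/3)}$ is decreasing in $\varphi$ and (via Chebyshev's sum inequality) increasing in $r$; so one of the two exponential sums must drop strictly. You instead minimize $F(\bar z)=\sum_i\bar z_i^2$ directly over the feasible set and classify the minimizer by which constraints are active: the ``both active'' case is settled by the elegant observation that $(e^{\bar z_i^\ast})$ and $(e^{\bar c_i})$ are then root multisets of the same cubic $t^3-S_1t^2+S_2t-1$, and the ``one active'' case is settled on the level curve $\Gamma=\{\sum_ie^{-\bar z_i}=S_2\}$ rather than on a Euclidean circle. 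Both proofs ultimately rest on the same two pillars --- at a symmetric critical point two coordinates must coincide (your strict-convexity count of solutions of $2t+\mu e^{-t}=\lambda$ plays the role of the paper's angular monotonicity), plus elementary one-variable monotonicity --- but your version avoids the trigonometric parametrization and the Chebyshev step, at the price of the KKT bookkeeping (constraint qualification, coercivity, case splitting). What the paper's Lemma~\ref{lemma1} buys in addition is the full equivalence $e^a+e^b+e^c\le e^x+e^y+e^z\Leftrightarrow a\le x$ on the circle, which is then reused to characterize the equality case (Remark~\ref{remark4}); your argument proves the inequality but would need a supplementary discussion for that. Two small points to tighten: the parenthetical claim that \emph{any} permutation-symmetric smooth function has only critical points with two equal coordinates on $\Gamma$ is false in that generality (a symmetric function can be constant on $\Gamma$) --- state and verify it only for $F$ and $\sum_ie^{\bar z_i}$, where your convexity argument does work; and ``unique minimizer'' / ``exactly two such points'' should read ``unique up to permutation'' (three points each on $\Gamma$), which is all the symmetry of $F$ requires.
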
\bigskip

Let us prove that Theorem \ref{exp} can be reformulated as Theorem \ref{exp2}. Indeed, let us assume that Theorem \ref{exp2} is valid and show that the statement of Theorem \ref{exp} also holds true. We denote by $s$ the sum $s= z_1+z_2+z_3= c_1+c_2+c_3$ and we designate
$$\bar z_i=z_i-\dfrac s3\,,\qquad \bar c_i=c_i-\dfrac s3\,\qquad (i=1,2,3).$$
Then, the real numbers $\bar z_i$ and $ \bar c_i$ satisfy the hypotheses of Theorem \ref{exp2} and we obtain
$ \bar{z}_1^2+\bar{z}_2^2+\bar{z}_3^2\geq \bar{c}_1^2+\bar{c}_2^2+\bar{c}_3^2\,$. This inequality is equivalent to
$$\sum_{i=1}^3 \Big(z_i-\dfrac s3\Big)^2 \geq \sum_{i=1}^3 \Big(c_i-\dfrac s3\Big)^2,$$
which, by virtue of the condition \eqref{3}$_3\,$, reduces to
$$  z_1^2+z_2^2+z_3^2\geq c_1^2+c_2^2+c_3^2\,.$$
Thus, Theorem \ref{exp} is also valid.

\bigskip

By virtue of the logical equivalence
$$(A \wedge B\,\,\,\Rightarrow\,\,\,C)\qquad\Leftrightarrow\qquad (\neg C \,\,\,\Rightarrow\,\,\, \neg A \vee \neg B)$$
for any statements $A,B,C$, we can formulate the inequality \eqref{3,3} (i.e., Theorem \ref{exp2}) in the following equivalent manner:

\begin{formulation}\label{forproof}
Let the real numbers $c_1, c_2, c_3 $ and $z_1, z_2, z_3 $ be such that
\begin{equation}\label{21}
    z_1+z_2+z_3= c_1+c_2+c_3=0\qquad\text{and}\qquad  z_1^2+z_2^2+z_3^2 < c_1^2+c_2^2+c_3^2\,.
\end{equation}
Then one of the following inequalities holds:
\begin{equation}\label{22}
    \begin{array}{l}
      e^{z_1}  +e^{z_2} +e^{z_3} <  e^{c_1}  +e^{c_2} +e^{c_3} \qquad\text{or}\vspace{6pt}\\
      e^{-z_1}   + e^{-z_2} + e^{-z_3} <  e^{-c_1}   + e^{-c_2} + e^{-c_3}.
    \end{array}
\end{equation}
\end{formulation}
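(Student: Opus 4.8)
The plan is to prove the contrapositive form, Theorem~\ref{forproof}, since the constraint $z_1+z_2+z_3=c_1+c_2+c_3=0$ makes the geometry tractable. Fix the $c_i$ with $\sum c_i=0$ and set $\rho^2:=c_1^2+c_2^2+c_3^2$. The set of admissible $(z_1,z_2,z_3)$ with $\sum z_i=0$ and $\sum z_i^2<\rho^2$ is the open disk $D$ obtained by intersecting the plane $\{\sum z_i=0\}$ with the open ball of radius $\rho$; the $c$-vector lies on its boundary circle $\partial D$. I want to show that on $D$ one cannot simultaneously have $f(z):=e^{z_1}+e^{z_2}+e^{z_3}\ge f(c)$ and $g(z):=e^{-z_1}+e^{-z_2}+e^{-z_3}\ge g(c)$. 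Equivalently, the closed region $K:=\{z\in\text{plane}:f(z)\ge f(c)\text{ and }g(z)\ge g(c)\}$ must be disjoint from the open disk $D$, i.e. $K$ meets the closed disk $\overline D$ only in points of the boundary circle $\partial D$.

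The key structural facts I would establish are: (i) $f$ and $g$ are convex functions on the plane $\{\sum z_i=0\}$ (restrictions of convex functions), so their sublevel sets are convex and the sets $\{f\ge f(c)\}$, $\{g\ge g(c)\}$ are complements of open convex sets; (ii) on the boundary circle $\partial D$, parametrize points by an angle and analyze where $f\ge f(c)$ and where $g\ge g(c)$ hold. Because of the symmetry $z\mapsto -z$ swapping $f$ and $g$, and the $S_3$-symmetry permuting coordinates, the circle $\partial D$ is divided by these symmetries into arcs, and I would check that the arc on which $f(z)\ge f(c)$ and the arc on which $g(z)\ge g(c)$ overlap only at isolated points (the $c$-point and its permutations/negatives). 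The heart of the matter is then a one-variable monotonicity or convexity statement along $\partial D$: traveling along the circle away from $c$, I expect $f$ to strictly decrease on one side while $g$ strictly decreases on the other, so that $K\cap\partial D$ is essentially $\{c\}$ together with forced symmetric copies. Combined with convexity of $\{f<f(c)\}$ and $\{g<g(c)\}$ — each of which contains $c$ on its closure — a separation/exhaustion argument shows $D\subset\{f<f(c)\}\cup\{g<g(c)\}$, which is exactly \eqref{22}.

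Concretely I would: first normalize using the $S_3\times\{\pm1\}$ symmetry to assume $c_1\ge c_2\ge c_3$; then introduce polar-type coordinates $(r,\varphi)$ in the deviatoric plane centered at $0$ so that $D=\{r<\rho\}$ and reduce everything to the behavior of $\widehat f(\varphi):=f|_{r=\rho}$ and $\widehat g(\varphi):=g|_{r=\rho}$ as functions on the circle; next prove the needed sign/monotonicity properties of $\widehat f$ and $\widehat g$ by differentiating in $\varphi$ and using convexity of $t\mapsto e^t$ (this is where the genuine analysis lives); finally assemble the pieces via convexity of the sublevel sets to rule out $K\cap D\neq\emptyset$. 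The main obstacle I anticipate is step three: controlling the competition between $\widehat f$ and $\widehat g$ along the whole circle, since neither is monotone on all of $\partial D$ and one must pin down exactly which arcs are ``bad'' for $f$ versus for $g$ and verify these bad arcs cover the circle with the sole overlap forced by symmetry. Handling the boundary case (equality, i.e.\ the matrices having coincident eigenvalues, so that $c$ lies at a symmetry point of the circle) will require a small separate argument, likely a second-order Taylor expansion of $f$ and $g$ at such points.
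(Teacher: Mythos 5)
Your geometric setup is correct and the strategy is coherent, but as written the proposal has a genuine gap: the one fact carrying all the analytic weight is announced as something you \emph{would} prove and is never established. After reducing to the circle $\partial D$ and introducing the angle $\varphi$, everything hinges on showing that for every $r>0$ the function
\[
\varphi\;\longmapsto\; e^{r\cos\varphi}+e^{r\cos(\varphi+2\pi/3)}+e^{r\cos(\varphi-2\pi/3)}
\]
is decreasing on $[0,\pi/3]$; by the $S_3\times\{\pm1\}$ symmetry this single statement controls both of your arcs, since on the fundamental arc $\widehat g(\varphi)=\widehat f(\pi/3-\varphi)$. This is exactly Lemma \ref{lemma1} of the paper, and its proof is not routine: one differentiates in $\varphi$, normalizes to the function $F(r,\varphi)$ of \eqref{15}, and then shows $F\le 0$ by proving $\partial F/\partial r\le 0$ together with $\lim_{r\searrow0}F(r,\varphi)=0$. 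You correctly flag this as ``where the genuine analysis lives'' and as ``the main obstacle'', but identifying the obstacle is not overcoming it; without this monotonicity, the claim that the two bad arcs overlap only in the orbit of $c$ is unproven and the theorem does not follow.

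The rest of your plan is sound and actually departs from the paper in an interesting way. The paper passes from the interior of the disk to its boundary by the rescaling $t_i=kz_i$, $k>1$, and then proves that $h(r,\varphi)$ from \eqref{12} is increasing in $r$ via Chebyshev's sum inequality \eqref{30}; you propose instead to use convexity of $f$ and $g$, hence convexity of the open sublevel sets $\{f<f(c)\}$ and $\{g<g(c)\}$ (both containing the origin), plus separation, to deduce $D\subset\{f<f(c)\}\cup\{g<g(c)\}$ from the boundary information. That step is true but also not carried out: one must argue that a point $p\in D$ lying in neither sublevel set is separated from each by a line, that the resulting wedge is unbounded with nonempty interior (here $0\in\{f<f(c)\}\cap\{g<g(c)\}$ is needed to exclude the degenerate case of a line), and that an unbounded closed convex set with interior meeting the open disk must meet $\partial D$ in an arc of positive length, contradicting the finiteness of the overlap set. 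If you supply both the angular monotonicity and this covering lemma in full detail, you obtain a valid proof in which convexity replaces the paper's radial Chebyshev step; as submitted, neither is proved.
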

We use the statement of Theorem \ref{forproof} for the proof.\\

Before continuing let us show that our new inequality is not a consequence of majorization and Karamata's inequality \cite{Karamata32}.
Consider $z=(z_1,\ldots,z_n)\in\R_+^n$ and $c=(c_1, \ldots, c_n)\in\R_+^n$ arranged already in decreasing order $z_1\ge z_2\ge \ldots \ge z_n$ and $c_1\ge c_2\ge \ldots \ge c_n$.
If
\begin{align}
  \label{majorization}
\sum_{i=1}^k z_{i}\ge \sum_{i=1}^k c_{i}\, ,\quad (1\le k\le n-1)\, ,\quad \sum_{i=1}^n z_{i}= \sum_{i=1}^n c_{i}\, ,
\end{align}
we say that $z$ majorizes $c$, denoted by $z\succ c$. The following result is well known \cite{Karamata32,Khan2012}\cite[p.89]{Hardy34}.
If $f:\R\mapsto\R$ is convex, then
\begin{align}
\label{karamata}
    z\succ c \quad \Rightarrow \quad \sum_{i=1}^n f(z_i)\ge \sum_{i=1}^n f(c_i)\, .
\end{align}
A function $g:\R^n\mapsto \R$ which satisfies
\begin{align}
\label{schur_convex}
    z\succ c \quad \Rightarrow \quad  g(z_1,\ldots,z_n)\ge  g(c_1,\ldots,c_n)
\end{align}
 is called Schur-convex. In Theorem \ref{exp} the convex function to be considered would be $f(t)=t^2$. Do conditions \eqref{3} (upon rearrangement of $z,c\in\R_+^3$ if necessary) yield already majorization $z\succ c$? This is not the case, as we explain now. Let the real numbers $z_1\geq z_2\geq z_3$ and $c_1\geq c_2\geq c_3 $ be such that
\begin{equation}\label{1}
    \begin{array}{l}
      e^{z_1}  +e^{z_2} +e^{z_3} \geq   e^{c_1}  +e^{c_2} +e^{c_3} ,\\
      e^{-z_1}   + e^{-z_2} + e^{-z_3} \geq  e^{-c_1}   + e^{-c_2} + e^{-c_3},\\
      z_1+z_2+z_3= c_1+c_2+c_3\, .
    \end{array}
\end{equation}
These conditions do not imply the majorization $z\succ c$,
\begin{equation}\label{2}
    z_1\geq c_1\,,\qquad z_1+z_2 \geq c_1+c_2\,,\qquad z_1+z_2+z_3= c_1+c_2+c_3\, .
    \end{equation}
Therefore, our inequality (i.e. $\, z_1^2+z_2^2+z_3^2\geq c_1^2+c_2^2+c_3^2\,\,$) does not follow from majorization in disguise.

\bigskip

Indeed, let $$z_1=\dfrac{1}{2}+\dfrac{0.95}{2\sqrt{3}}\,,\qquad z_2=\dfrac{1}{2}+\dfrac{0.85}{2\sqrt{3}}\,,\qquad z_3=-1-\dfrac{0.9}{\sqrt{3}}\,.$$
and
$$c_1=\dfrac{1}{2}+\dfrac{1}{2\sqrt{3}}\,,\qquad c_2=-\dfrac{1}{2}+\dfrac{1}{2\sqrt{3}}\,,\qquad c_3=-\dfrac{1}{\sqrt{3}}\,,$$
Then, we have $z_1> z_2> z_3  $ and $c_1> c_2> c_3 $, together with
\begin{equation*}
    \begin{array}{l}
      e^{z_1}  +e^{z_2} +e^{z_3} = 4.49497... >  3.57137... =   e^{c_1}  +e^{c_2} +e^{c_3} ,\\
      e^{-z_1}   + e^{-z_2} + e^{-z_3}= 5.50607... >  3.47107...=  e^{-c_1}   + e^{-c_2} + e^{-c_3},\\
      z_1+z_2+z_3= c_1+c_2+c_3=0,
    \end{array}
\end{equation*}
but the majorization inequalities \eqref{2} are not satisfied, since $z_1\,<\,c_1\,$.

\section{Proof of the inequality}
Of course we may assume without loss of generality that $c_1\geq c_2\geq c_3 $ and $z_1\geq z_2\geq z_3$ (and the same for $a_i,d_i,x_i,y_i$).

The proof begins with the crucial

\begin{lemma}\label{lemma1}
Let the real numbers $a\geq b\geq c  $ and $x\geq y\geq z  $ be such that
\begin{equation}\label{4}
    \begin{array}{l}
     a+b+c=x+y+z=0,\qquad a^2+b^2+c^2=x^2+y^2+z^2.
    \end{array}
\end{equation}
Then, the inequality
\begin{equation}\label{5}
    \begin{array}{l}
     e^a+e^b+e^c\leq e^x+e^y+e^z
    \end{array}
\end{equation}
is satisfied  if and only if the relation
\begin{equation}\label{6}
    \begin{array}{l}
     a\leq x
    \end{array}
\end{equation}
holds, or equivalently, if and only if
\begin{equation}\label{7}
    \begin{array}{l}
     c\leq z
    \end{array}
\end{equation}
holds.
\end{lemma}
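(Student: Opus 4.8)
The plan is to reduce Lemma~\ref{lemma1} to the strict monotonicity of one explicit function of a single real variable, and then to observe that this monotonicity is nothing but the statement that the secant slopes of $t\mapsto e^t$ strictly increase. If $a^2+b^2+c^2 = x^2+y^2+z^2 = 0$, all six numbers vanish and both the equivalence and its two sides hold trivially; so assume $r^2 := a^2+b^2+c^2 = x^2+y^2+z^2 > 0$. Under the side conditions $a+b+c = 0$, $a^2+b^2+c^2 = r^2$ the two smaller entries $b\ge c$ are exactly the (ordered) roots of $t^2 + at + (a^2 - \tfrac{r^2}{2}) = 0$, because $b+c = -a$ and $b^2+c^2 = r^2-a^2$, whence $bc = \tfrac12\big((b+c)^2-(b^2+c^2)\big) = a^2 - \tfrac{r^2}{2}$. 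Thus $b = b(a)$ and $c = c(a)$ are determined by the largest entry $a$ alone; a short computation shows that the admissible values of $a$ fill the closed interval $I = \big[\,r/\sqrt6,\ r\sqrt{2/3}\,\big]$, with $a = b$ at the left end, $b = c$ at the right end, and $a > b(a) > c(a)$ strictly in the interior. The triple $(x,y,z)$ is governed by the same $r$ and the same functions $b(\cdot), c(\cdot)$, so it is enough to show that
\[
  F(a) := e^a + e^{b(a)} + e^{c(a)} \qquad\text{and}\qquad a\longmapsto c(a)
\]
are both strictly increasing on $I$; then $e^a+e^b+e^c \le e^x+e^y+e^z \Leftrightarrow F(a)\le F(x) \Leftrightarrow a\le x \Leftrightarrow c(a)\le c(x) \Leftrightarrow c\le z$, which is the assertion of the Lemma.

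That $c(a) = \tfrac12\big(-a - \sqrt{2r^2-3a^2}\big)$ is strictly increasing on $I$ is a one-line derivative check. For $F$, I would differentiate the identities $b(a)+c(a) = -a$ and $b(a)^2+c(a)^2 = r^2-a^2$ with respect to $a$ to get $1 + b' + c' = 0$ and $a + b\,b' + c\,c' = 0$; hence $(1,b',c')$ is orthogonal to $(1,1,1)$ and to $(a,b,c)$, so it is a multiple of their cross product, which yields $b' = \tfrac{c-a}{b-c}$ and $c' = \tfrac{a-b}{b-c}$ on the interior of $I$. Substituting and using the elementary identity $(b-c)e^a + (c-a)e^b + (a-b)e^c = (a-b)(b-c)\big(\tfrac{e^a-e^b}{a-b} - \tfrac{e^b-e^c}{b-c}\big)$, one finds
\[
  F'(a) = e^a + \frac{c-a}{b-c}\,e^b + \frac{a-b}{b-c}\,e^c = (a-b)\left(\frac{e^a-e^b}{a-b} - \frac{e^b-e^c}{b-c}\right).
\]
On the interior of $I$ we have $a > b > c$, so $a-b > 0$ and, by the strict convexity of $t\mapsto e^t$ (the secant slope over $[b,a]$ strictly exceeds that over $[c,b]$), the parenthesis is strictly positive; hence $F' > 0$ there. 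Since $F$ is continuous on $I$, it is strictly increasing on all of $I$. Only a word is needed at the endpoints: at $a=b$ one has $F'=0$ but strict increase on $I$ persists from the interior sign, and at $b=c$ one passes to the limit in the secant-slope expression.

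The main obstacle is the first step: recognizing that, on the circle $\{a+b+c=0\}\cap\{a^2+b^2+c^2=r^2\}$, the ordered arc $\{a\ge b\ge c\}$ is faithfully parametrized by its largest coordinate, and that the quantity $e^a+e^b+e^c$ varies monotonically along it. Once this one-variable picture is in place the sign of $F'$ collapses to the convexity inequality for $\exp$, and everything else is bookkeeping. One could equally parametrize the arc by an angle using an orthonormal basis of the plane $\{a+b+c=0\}$; the resulting computation is essentially the same.
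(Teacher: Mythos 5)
Your proof is correct, and while it shares the paper's overall reduction --- parametrize the ordered arc $\{a+b+c=0,\ a^2+b^2+c^2=r^2,\ a\ge b\ge c\}$ by its largest coordinate and prove that both $a\mapsto c(a)$ and $F(a)=e^a+e^{b(a)}+e^{c(a)}$ are strictly increasing --- your treatment of the second, decisive monotonicity is genuinely different and substantially shorter. The paper sets $a=r_0\cos\varphi$ with $r_0=\sqrt{\tfrac23(a^2+b^2+c^2)}$, so that $b,c$ become $r_0\cos(\varphi\pm\tfrac{2\pi}{3})$, and establishes $\partial_\varphi h\le 0$ through a nested monotonicity argument: the suitably rescaled derivative is shown to be nonincreasing in $r_0$ with limit $0$ as $r_0\searrow 0$. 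You instead differentiate the constraints implicitly, obtain $b'=\frac{c-a}{b-c}$ and $c'=\frac{a-b}{b-c}$, and identify
\[
F'(a)=(a-b)\left(\frac{e^a-e^b}{a-b}-\frac{e^b-e^c}{b-c}\right)>0
\]
on the interior as the three-chord (secant-slope) inequality for the strictly convex function $\exp$; continuity then upgrades this to strict increase on the closed interval, the endpoint degeneracies ($a=b$, resp.\ $b=c$) being harmless exactly as you say. The supporting computations check out: $b,c$ are the ordered roots of $t^2+at+(a^2-\tfrac{r^2}{2})$, your interval $\bigl[r/\sqrt6,\,r\sqrt{2/3}\,\bigr]$ is the paper's $[r_0/2,r_0]$ after rescaling, the derivative of $c(a)=\tfrac12\bigl(-a-\sqrt{2r^2-3a^2}\,\bigr)$ is nonnegative precisely for $a\ge r/\sqrt6$ (this step replaces the paper's observation that $t\mapsto t+\sqrt{3(1-t^2)}$ decreases on $[\tfrac12,1]$), and the algebraic identity behind the displayed formula for $F'$ is verified by direct expansion. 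What your route buys is transparency: the lemma collapses to the single fact that difference quotients of $e^t$ increase --- equivalently, that the derivative of $\exp$ is itself convex --- whereas the paper's trigonometric route, though self-contained, hides which property of the exponential is actually being used.
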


\begin{proof}
Let us denote by $r:=\sqrt{\frac{2}{3}\,(a^2+b^2+c^2)}\,>0$. Then, from \eqref{4} it follows
\begin{equation*}
    \begin{array}{c}
      b+c=-a,\qquad b^2+c^2=\frac{3}{2}\,r^2-a^2, \vspace{6pt}\\
      y+z=-x,\qquad y^2+z^2=\frac{3}{2}\,r^2-x^2,
    \end{array}
\end{equation*}
and we find
\begin{equation}\label{8}
    \begin{array}{l}
      b=\frac{1}{2}\,\big(-a+\sqrt{3(r^2-a^2)}\,\big)\,,\qquad c=\frac{1}{2}\,\big(-a-\sqrt{3(r^2-a^2)}\,\big)\,, \vspace{6pt}\\
      y=\frac{1}{2}\,\big(-x+\sqrt{3(r^2-x^2)}\,\big)\,,\qquad z=\frac{1}{2}\,\big(-x-\sqrt{3(r^2-x^2)}\,\big)\,.
    \end{array}
\end{equation}
In view of \eqref{4} and $a\geq b\geq c  $ , $x\geq y\geq z  $ , one can show that
\begin{equation}\label{9}
\begin{array}{l}
     a,x\in\big[\,\dfrac{r}{2}\,,r\big],\qquad b,y\in\big[-\,\dfrac{r}{2}\,,\,\dfrac{r}{2}\,\big],\qquad c,z\in\big[-r\,,-\,\dfrac{r}{2}\,\big].
    \end{array}
\end{equation}
Indeed, let us verify the relations \eqref{9}. We have
\begin{equation*}
    \begin{array}{l}
       \dfrac{r}{2}\,\leq\,a\,\leq\,r \quad \Leftrightarrow\quad  \dfrac{1}{6}\,(a^2+b^2+c^2)\leq a^2\leq \dfrac{2}{3}\,(a^2+b^2+c^2)  \vspace{6pt}\\ \Leftrightarrow\quad  b^2+c^2\leq 5a^2\,\,\,\text{and}\,\,\, a^2\leq 2(b^2+c^2)
       \quad \Leftrightarrow\quad    b^2+(a+b)^2\leq 5a^2    \,\,\,\text{and}\,\,\, (b+c)^2\leq 2(b^2+c^2)  \vspace{6pt}
       \\ \Leftrightarrow\quad    4a^2-2ab-2b^2\geq 0     \,\,\,\text{and}\,\,\, b^2+c^2\geq 2bc
       \quad \Leftrightarrow\quad       2(a-b)(2a+b)\geq 0    \,\,\,\text{and}\,\,\, (b-c)^2\geq 0,
     \end{array}
\end{equation*}
which hold true since $a\geq b$ and $2a+b\geq a+b+c=0$. Similarly, we have
\begin{equation*}
    \begin{array}{l}
       -\dfrac{r}{2}\,\leq\,b\,\leq\,\dfrac{r}{2} \quad \Leftrightarrow\quad  b^2\leq \dfrac{r^2}{4} \quad \Leftrightarrow\quad
      4b^2\leq \dfrac{2}{3}\,(a^2+b^2+c^2)  \quad \Leftrightarrow\quad     5b^2\leq a^2+c^2   \vspace{6pt}\\ \Leftrightarrow\quad
       5b^2\leq a^2+(a+b)^2    \quad  \Leftrightarrow\quad    2a^2+2ab-4b^2\geq 0
       \quad \Leftrightarrow\quad       2(a-b)(a+2b)\geq 0  ,
     \end{array}
\end{equation*}
which holds true since $a\geq b$ and $a+2b\geq a+b+c=0$. Also, we have
\begin{equation*}
    \begin{array}{l}
       -r\,\leq\,c\,\leq\,-\dfrac{r}{2} \quad \Leftrightarrow\quad  r^2\geq c^2\geq \,\dfrac{r^2}{4}  \quad \Leftrightarrow\quad \dfrac{2}{3}\,(a^2+b^2+c^2) \geq c^2\geq \dfrac{1}{6}\,(a^2+b^2+c^2)  \vspace{6pt}\\ \Leftrightarrow\quad  2(a^2+b^2)\geq c^2\,\,\,\text{and}\,\,\, 5c^2\geq a^2+b^2
       \quad \Leftrightarrow\quad    2(a^2+b^2)\geq (a+b)^2    \,\,\,\text{and}\,\,\, 5(a+b)^2\geq a^2+b^2  \vspace{6pt}
       \\ \Leftrightarrow\quad     (a-b)^2\geq 0   \,\,\,\text{and}\,\,\, 4a^2+10ab+4b^2\geq 0
       \quad \Leftrightarrow\quad      (a-b)^2\geq 0    \,\,\,\text{and}\,\,\, 2(a+2b)(2a+b)\geq 0 ,
     \end{array}
\end{equation*}
which hold true since $a+2b\geq a+b+c=0$ and $2a+b\geq a+b+c=0$. One can show in the same way that $x\in\big[\,\dfrac{r}{2}\,,r\big]$ , $y\in\big[-\,\dfrac{r}{2}\,,\,\dfrac{r}{2}\,\big]$ , $z\in\big[-r\,,-\,\dfrac{r}{2}\,\big]$ , so that \eqref{9} has been verified.
\medskip

We prove now that the inequality \eqref{6} holds if and only if \eqref{7} holds. Indeed, using \eqref{8}$_{2,4}$ and \eqref{9} we get
\begin{equation*}
    \begin{array}{l}
       c\leq z\qquad \Leftrightarrow\qquad  -a-\sqrt{3(r^2-a^2)}\,\,\leq\,-x-\sqrt{3(r^2-x^2)} \qquad \Leftrightarrow\qquad\vspace{10pt}\\
       \qquad \Leftrightarrow\qquad \dfrac{a}{r}\,+\sqrt{3\Big(1-\big(\dfrac{a}{r}\big)^2\Big)} \geq \dfrac{x}{r}\,+\sqrt{3\Big(1-\big(\dfrac{x}{r}\big)^2\Big)} \qquad \Leftrightarrow\qquad a\leq x\,,
     \end{array}
\end{equation*}
since the function $\,t\,\mapsto\, t+\sqrt{3(1-t^2)}\,\,$ is decreasing for $t\in\big[\,\,\dfrac{1}{2}\,,\,1\big]$.\medskip

Let us prove next that the inequalities \eqref{5} and \eqref{6} are equivalent. To accomplish this, we introduce the function $\,\,f:\big[\,\dfrac{r}{2}\,,r\big]\rightarrow\mathbb{R}$ by
\begin{equation}\label{10}
    f(x)=e^x+ e^{\big(-x+\sqrt{3(r^2-x^2)}\,\big)/2} +e^{\big(-x-\sqrt{3(r^2-x^2)}\,\big)/2}.
\end{equation}
Taking into account \eqref{8} and \eqref{9}$_1$, the inequality \eqref{5} can be written equivalently as
\begin{equation}\label{11}
    f(a)\leq f(x)\,,
\end{equation}
which is equivalent to
\begin{equation*}
    a\leq x\,,
\end{equation*}
since the function $\,f\,$ defined by \eqref{10} is monotone increasing  on $\,\big[\,\dfrac{r}{2}\,,r\big]$, as we show next. To this aim, we denote by
$$\cos \varphi:=\dfrac{x}{r}\,\in\big[\,\,\dfrac{1}{2}\,,\,1\big],\qquad\text{i.e.}\qquad \varphi:=\arccos\Big(\dfrac{x}{r}\,\Big)\in\big[\,0,\,\dfrac{\pi}{3}\,\big].$$
Then, the function \eqref{10} can be written as
\begin{equation}\label{12}
    \begin{array}{l}
     f(x)=h(r,\varphi),\qquad \text{where}\qquad h:(0,\infty)\times \big[\,0,\,\dfrac{\pi}{3}\,\big]\rightarrow\mathbb{R}, \vspace{10pt} \\
      h(r,\varphi)=e^{r\cos\varphi} +e^{r\cos(\varphi+2\pi/3)} +e^{r\cos(\varphi-2\pi/3)}.
    \end{array}
\end{equation}
We have to show that $h(r,\varphi)$ is decreasing with respect to $\varphi\in\big[\,0,\,\dfrac{\pi}{3}\,\big]$. We compute the first derivative
\begin{equation}\label{13}
 \begin{array}{l}
    \dfrac{\partial}{\partial \varphi}\,h(r,\varphi) = -r\Big[ e^{r\cos\varphi} \sin\varphi +e^{r\cos(\varphi+2\pi/3)} \sin(\varphi+\frac{2\pi}{3}\,)+e^{r\cos(\varphi-2\pi/3)}\sin(\varphi-\frac{2\pi}{3}\,)\Big].
     \end{array}
\end{equation}
The function \eqref{13} has the same sign as the function
\begin{equation}\label{14}
    F(r,\varphi):=\dfrac{1}{r}\,e^{-r\cos\varphi}\, \dfrac{\partial}{\partial \varphi}\,h(r,\varphi),
\end{equation}
i.e. the function $F:(0,\infty)\times \big[\,0,\,\dfrac{\pi}{3}\,\big]\rightarrow\mathbb{R}$ given by
\begin{equation}\label{15}
\begin{array}{l}
    F(r,\varphi)= -\sin\varphi- e^{-r\sqrt{3}\sin(\varphi+\pi/3)} \sin(\varphi+\frac{2\pi}{3}\,)- e^{r\sqrt{3}\sin(\varphi-\pi/3)} \sin(\varphi-\frac{2\pi}{3}\,).
    \end{array}
\end{equation}
In order to show that $\,\,\,F(r,\varphi)\leq0\,\,\,$ for all $\,\,(r,\varphi)\in(0,\infty)\times\big[\,0,\,\dfrac{\pi}{3}\,\big]$,  we remark that
$\,\,\displaystyle{\lim_{r\searrow0}}F(r,\varphi)=0\,\,$ for fixed $\,\,\varphi\in\big[\,0,\,\dfrac{\pi}{3}\,\big]\,\,$
and we compute
\begin{equation*}
    \begin{array}{l}
       \dfrac{\partial}{\partial r}\,F(r,\varphi) = \sqrt{3}\Big[ e^{-r\sqrt{3}\sin(\varphi+\pi/3)} \sin(\varphi+\frac{\pi}{3}) \sin(\varphi+\frac{2\pi}{3})- e^{r\sqrt{3}\sin(\varphi-\pi/3)} \sin(\varphi-\frac{\pi}{3})\sin(\varphi-\frac{2\pi}{3})\Big]\vspace{10pt}\\
        \quad=
        \sqrt{3}\Big[   e^{-r\sqrt{3}\sin(\varphi+\pi/3)}\dfrac{1}{2}\big(-\cos(2\varphi+\pi)+ \cos\frac{\pi}{3}\,\big)
        - e^{r\sqrt{3}\sin(\varphi-\pi/3)}\dfrac{1}{2}\big(-\cos(2\varphi-\pi)+ \cos\frac{-\pi}{3}\,\big) \Big]
       \vspace{10pt}\\
      \qquad= \dfrac{\sqrt{3}}{2}\,\big(\cos2\varphi+\dfrac{1}{2}\,\big)\Big[ e^{-r\sqrt{3}\sin(\varphi+\pi/3)} - e^{r\sqrt{3}\sin(\varphi-\pi/3)}  \Big]\,\leq 0,
    \end{array}
\end{equation*}
since $\,\,\varphi\in\big[\,0,\,\dfrac{\pi}{3}\,\big]\,\,$ implies $\,\,\cos2\varphi\geq -\frac{1}{2}\,\,\,\,$ and $\,\,\,-\sin(\varphi+\frac{\pi}{3}\,)\leq \sin(\varphi-\frac{\pi}{3}\,)$.

Consequently, the function $\,F(r,\varphi)\,$ is decreasing with respect to $\,r\,$ and for any $\,\,(r,\varphi)\in(0,\infty)\times\big[\,0,\,\dfrac{\pi}{3}\,\big]$ we have that
\begin{equation}\label{16}
    F(r,\varphi)\,\,\leq \,\,\lim_{r\searrow0}F(r,\varphi)\,\,=\,0\,.
\end{equation}
From \eqref{14} and \eqref{16} it follows that $\,h(r,\varphi)\,$  is decreasing with respect to $\,\,\varphi\in\big[\,0,\,\dfrac{\pi}{3}\,\big]\,\,$. This means that $\,f(x)\,$ is increasing as a function of $\,x\in\big[\,\dfrac{r}{2}\,,r\big]$, i.e. the relation \eqref{11} is indeed equivalent to $\,a\leq x\,$ and the proof is complete.
\end{proof}

\begin{consequence}\label{cons2}
Let the real numbers $a\geq b\geq c$ and $x\geq y\geq z  $ be such that
\begin{equation*}
    \begin{array}{l}
     a+b+c=x+y+z=0,\qquad a^2+b^2+c^2=x^2+y^2+z^2.
    \end{array}
\end{equation*}
Then, one of the following inequalities holds:
\begin{equation}\label{17}
    \begin{array}{l}
     e^a+e^b+e^c\leq e^x+e^y+e^z\,,
    \end{array}
\end{equation}
or
\begin{equation}\label{18}
    \begin{array}{l}
     e^{-a}+e^{-b}+e^{-c}\leq e^{-x}+e^{-y}+e^{-z}\,.
    \end{array}
\end{equation}
The inequalities \eqref{17} and \eqref{18} are satisfied simultaneously if and only if $a=x$ , $b=y$ and $c=z$.
\end{consequence}

\begin{proof}
According to Lemma \ref{lemma1}, the inequality \eqref{17} is equivalent to
\begin{equation}\label{19}
    a\leq x\,,
\end{equation}
while the inequality \eqref{18} is equivalent to
\begin{equation}\label{20}
    -a\leq -x\,.
\end{equation}
Since one of the relations  \eqref{19} and \eqref{20}  must hold, we have proved that one of the inequalities \eqref{17} and \eqref{18} is satisfied. They are simultaneously satisfied if and only if both \eqref{19} and \eqref{20} hold true, i.e. $a=x$ (and consequently $b=y$, $c=z$).
\end{proof}

\begin{consequence}\label{cons3}
Let the real numbers $a\geq b\geq c$ and $x\geq y\geq z  $ be such that
\begin{equation*}
    \begin{array}{l}
     a+b+c=x+y+z=0,\qquad a^2+b^2+c^2=x^2+y^2+z^2\vspace{6pt}\\
     \quad\text{and}\qquad e^a+e^b+e^c = e^x+e^y+e^z\,.
    \end{array}
\end{equation*}
Then, we have $a=x$ , $b=y$ and $c=z$.
\end{consequence}

\begin{proof}
Since by hypothesis $e^a+e^b+e^c\leq e^x+e^y+e^z\,$ holds, we can apply the Lemma \ref{lemma1} to deduce $a\leq x$ and $c\leq z$.

On the other hand, by virtue of the inverse inequality $\,e^x+e^y+e^z\leq e^a+e^b+e^c\,$ and Lemma \ref{lemma1} we obtain $x\leq a$ and $z\leq c$. In conclusion, we get $a=x$ , $c=z$ and $b=y$.
\end{proof}

\begin{proof}[Proof of Theorem \ref{forproof}]
In order to prove \eqref{22} we define the real numbers
\begin{equation}\label{23}
    t_i=k\,z_i\quad (i=1,2,3)\qquad\text{where}\quad k=\sqrt{\dfrac{c_1^2+c_2^2+c_3^2}{z_1^2+z_2^2+z_3^2}}\,\,>1.
\end{equation}
Then we have
\begin{equation}\label{24}
    t_1+t_2+t_3= c_1+c_2+c_3=0\qquad\text{and}\qquad  t_1^2+t_2^2+t_3^2 = c_1^2+c_2^2+c_3^2\,.
\end{equation}
If we apply the Consequence \ref{cons2} for the numbers  $c_1\geq c_2\geq c_3 $ and $t_1\geq t_2\geq t_3 $ , then we obtain that
\begin{equation}\label{25}
   \begin{array}{l}
      e^{t_1}  +e^{t_2} +e^{t_3} \leq  e^{c_1}  +e^{c_2} +e^{c_3} \qquad\text{or}\vspace{6pt}\\
      e^{-t_1}   + e^{-t_2} + e^{-t_3} \leq  e^{-c_1}   + e^{-c_2} + e^{-c_3}.
    \end{array}
\end{equation}
In what follows, let us show that
\begin{equation}\label{26}
    e^{z_1}  +e^{z_2} +e^{z_3} < e^{t_1}  +e^{t_2} +e^{t_3}\,.
\end{equation}
Using the notations
$\rho:=\sqrt{\frac{2}{3}\,(z_1^2+z_2^2+z_3^2)}\,\,$ and
$$\cos\zeta:=\dfrac{z_1}{\rho}\,\in
\big[\,\dfrac{1}{2},\,1\,\big],\qquad\text{i.e.}\qquad \zeta:=\arccos \Big(\dfrac{z_1}{\rho}\Big)\in\big[\,0,\,\dfrac{\pi}{3}\,\big],$$
we have $k\rho:=\sqrt{\frac{2}{3}\,(t_1^2+t_2^2+t_3^2)}\,\,\,\,$ and $\,\,\,\cos\zeta=\dfrac{t_1}{k\rho}\,$.
With the help of
the function $\,h\,$ defined in \eqref{12}, we can write the inequality \eqref{26} in the form
$$e^{\rho\cos\zeta} +e^{\rho\cos(\zeta+2\pi/3)} +e^{\rho\cos(\zeta-2\pi/3)} < e^{k\rho\cos\zeta} +e^{k\rho\cos(\zeta+2\pi/3)} +e^{k\rho\cos(\zeta-2\pi/3)}\,,\qquad\text{or}$$
\begin{equation}\label{27}
    h(\rho,\zeta)< h(k\rho,\zeta),\qquad \forall \,(\rho,\zeta)\in (0,\infty)\times\big[\,0,\,\dfrac{\pi}{3}\,\big],\,\,\,k>1.
\end{equation}
The relation \eqref{27} asserts that the function $\,h\,$ defined in \eqref{12} is increasing with respect to the first variable $r\in(0,\infty)$. To show this, we compute the derivative
\begin{equation}\label{28}
\begin{array}{l}
    \dfrac{\partial}{\partial r}\,h(r,\varphi) = e^{r\cos\varphi} \cos\varphi +e^{r\cos(\varphi+2\pi/3)} \cos(\varphi+\frac{2\pi}{3}\,)+e^{r\cos(\varphi-2\pi/3)}\cos(\varphi-\frac{2\pi}{3}\,).
    \end{array}
\end{equation}
By virtue of the Chebyshev's sum inequality we deduce from \eqref{28} that
\begin{equation}\label{29}
    \dfrac{\partial}{\partial r}\,h(r,\varphi)\,>0.
\end{equation}
Indeed, the Chebyshev's sum inequality \cite[2.17]{Hardy34} asserts that: if $a_1\geq a_2\geq ...\geq a_n$ and $b_1\geq b_2\geq ...\geq b_n$ then
$$n\sum_{k=1}^na_kb_k\,\geq\, \Big(\sum_{k=1}^na_k\Big)\Big(\sum_{k=1}^nb_k\Big).$$
In our case, we derive the following result: for any real numbers $x,y,z$ such that $x+y+z=0$, the inequality
\begin{equation}\label{30}
    xe^x+ye^y+ze^z\geq \frac{1}{3}\,(x+y+z)(e^x+e^y+e^z)= 0\,,
\end{equation}
holds true, with equality if and only if $x=y=z=0$.

Applying the result \eqref{30} to the function \eqref{28} we deduce the relation \eqref{29}. This means that $\,h(r,\varphi)\,$   is an increasing function of $r$, i.e. the inequality \eqref{27} holds, and hence, we have proved \eqref{26}.

One can show analogously that the inequality
\begin{equation}\label{31}
    e^{-z_1}  +e^{-z_2} +e^{-z_3} < e^{-t_1}  +e^{-t_2} +e^{-t_3}\,
\end{equation}
is also valid. From \eqref{25}, \eqref{26} and \eqref{31} it follows that the assertion \eqref{22} holds true. Thus, the proof of Theorem \ref{forproof} is complete.
\end{proof}

Since the statements of the Theorems \ref{exp} and \ref{forproof} are equivalent, we have proved also the inequality \eqref{3bis}.

\begin{remark}\label{remark4}
The inequality \eqref{3bis} becomes an equality if and only if $z_i=c_i$ , $i=1,2,3$.
\end{remark}

\begin{proof}
Indeed, assume that   $z_1^2+z_2^2+z_3^2= c_1^2+c_2^2+c_3^2\,$. Then, we can apply the Consequence \ref{cons2} and we deduce that
\begin{equation}\label{32}
    \begin{array}{l}
      e^{z_1}  +e^{z_2} +e^{z_3} \leq   e^{c_1}  +e^{c_2} +e^{c_3} \,\,\quad\text{or}\quad\,\,
      e^{-z_1}   + e^{-z_2} + e^{-z_3} \leq  e^{-c_1}   + e^{-c_2} + e^{-c_3}.
    \end{array}
\end{equation}
Taking into account \eqref{3}$_{1,2}$ in conjunction with \eqref{32} we find
\begin{equation}\label{33}
    \begin{array}{l}
      e^{z_1}  +e^{z_2} +e^{z_3} =   e^{c_1}  +e^{c_2} +e^{c_3} \,\,\quad\text{or}\quad\,\,
      e^{-z_1}   + e^{-z_2} + e^{-z_3} =  e^{-c_1}   + e^{-c_2} + e^{-c_3}.
    \end{array}
\end{equation}
By virtue of \eqref{33} we can apply the Consequence \ref{cons3} to derive  $z_1= c_1$ and consequently $z_2= c_2$ , $z_3= c_3\,$.
\end{proof}

Let us prove the following version of the inequality \eqref{1bis} for two pairs of numbers $d_1,d_2$ and $x_1,x_2$ :
\begin{remark}\label{remark6}
If the real numbers $d_1\geq d_2>0$ and $x_1\geq x_2>0$ are such that
\begin{equation}\label{40}
    \begin{array}{l}
      x_1^2 +x_2^2 \geq  d_1^2 +d_2^2  \qquad\text{and}\qquad
      x_1x_2 \,=\, d_1d_2 \,=1\,,
    \end{array}
\end{equation}
then the inequality
\begin{equation}\label{41}
    (\log x_1)^2 + (\log x_2)^2  \geq (\log d_1)^2 + (\log d_2)^2
\end{equation}
holds true.
Note that the additional condition
\[
 \frac{1}{x_1^2}+\frac{1}{x_2^2}\geq \frac{1}{d_1^2}+\frac{1}{d_2^2}
\]
is automatically fulfilled.
\end{remark}

\begin{proof}
Since $\,x_1x_2 = d_1d_2 =1\,$ and $d_1\geq d_2>0$ , $x_1\geq x_2>0$,  we have $x_1\geq 1$ , $d_1\geq 1$ and
$$\log x_1=-\log x_2\geq 0\,,\qquad \log d_1=-\log d_2\geq 0\,,$$
so that the inequality \eqref{41} is equivalent to $\,\log x_1\geq \log d_1\,\,$, i.e. we have to show that $\,\,x_1\geq d_1\,$.

Indeed, if we insert $\,x_2=\dfrac{1}{x_1}\,\,$ and $\,d_2=\dfrac{1}{d_1}$ into the inequality \eqref{40}$_1$ then we find
$$x_1^2+\dfrac{1}{x_1^2}\,\geq \,d_1^2+\dfrac{1}{d_1^2}\,\,,$$
which means that $x_1\geq d_1$ since the function $\,\,t\mapsto t^2+\dfrac{1}{t^2}\,\,$ is increasing for $t\in [1,\infty)$. This completes the proof.
\end{proof}

\begin{proof}[Alternative proof of Remark \ref{remark6}]

Let $x_3=d_3=1$. Then \eqref{40} implies $x_1^2+x_2^2+x_3^2\geq d_1^2+d_2^2+d_3^2$ and $x_1x_2x_3=d_1d_2d_3=1$ as well as
\begin{equation}\label{42}
 x_1^2x_2^2+x_2^2x_3^2+x_1^2x_3^2=1+x_2^2+x_1^2\geq 1+d_2^2+d_1^2=d_1^2d_2^2+d_2^2d_3^2+d_1^2d_3^2,
\end{equation}
because $x_1^2x_2^2=1=d_1^2d_2^2$, and Theorem \ref{mitquadraten} provides the assertion.
\end{proof}

\section{Some counterexamples for weakened assumptions}

\begin{ex}
Unlike in the 2D case in Remark \ref{remark6}, for two triples of numbers the second condition \eqref{2}$_2\,$ of Theorem \ref{ohnequadrat}, namely $ y_1 y_2  +y_2 y_3 +y_1 y_3 \geq  a_1 a_2  +a_2 a_3 +a_1 a_3\,$, cannot be removed.
Let
\[y_1=e^6, y_2=1, y_3=e^{-6}, a_1=e^4, a_2=e^4, a_3=e^{-8}.\]
Then $y_1y_2y_3=a_1a_2a_3=1$ and
\[y_1 +y_2 +y_3 >e^6>e^2e^4>3e^4>a_1 +a_2 +a_3 ,\]
but
\[
 (\log y_1)^2+(\log y_2)^2+(\log y_3)^2=36+0+36<16+16+64=(\log a_1)^2+(\log a_2)^2+(\log a_3)^2.
\]
\end{ex}\bigskip

\begin{ex}
 The condition $y_1y_2y_3=a_1a_2a_3$ cannot be weakened to $y_1y_2y_3\geq a_1a_2a_3$.
Indeed, let $y_2=y_3=a_1=a_2=1$, $y_1=e$, $a_3= e^{-2}$. Then
\begin{equation*}
    \begin{array}{c}
       y_1+y_2+y_3=e+1+1\geq 1+1+e^{-2}=a_1+a_2+a_3\,,\\
       \quad y_1y_2+y_1y_3+y_2y_3=e+e+1\geq 1+e^{-2}+e^{-2}=a_1a_2+a_1a_3+a_2a_3\,,\\
       y_1y_2y_3=e\geq e^{-2}=a_1a_2a_3\,.
    \end{array}
\end{equation*}

But nevertheless
 \[
      (\log y_1)^2 + (\log y_2)^2 + (\log y_3)^2 = 1+0+0 < 0+0+4= (\log a_1)^2 + (\log a_2)^2 + (\log a_3)^2.
 \]

 A counterexample for the two variable case can be constructed analogously.
\end{ex}\bigskip

\begin{ex} Even with an analogous condition, the inequality \eqref{2bis} does not hold for $n=4$   numbers (without further assumptions). Indeed, let
\[y_1=e, y_2=y_3=e^7, y_4=e^{-15}, a_1=a_2=e^6, a_3=e^7, a_4=e^{-19}.\]
Then $y_1y_2y_3y_4=a_1a_2a_3a_4=1$.
Also
\[y_1+y_2+y_3+y_4= e+e^7+e^7+e^{-15}>0 + e^7+ 2e^6+ e^{-19}=a_1+a_2+a_3+a_4.\]
Furthermore
\[ y_1y_2+y_1y_3+y_1y_4+y_2y_3+y_2y_4+y_3y_4=e^8+e^8+e^{-14}+e^{14}+e^{-8}+e^{-8}\]
and
\[a_1a_2+a_1a_3+a_1a_4+a_2a_3+a_2a_4+a_3a_4= e^{12}+e^{13}+e^{-13}+e^{13}+e^{-13}+e^{-12}.\]
Since $e^2>2e+1$, we have $e^{14}>e^{13}+e^{13}+e^{12}$ and therefore
\[y_1y_2+y_1y_3+y_1y_4+y_2y_3+y_2y_4+y_3y_4 \geq a_1a_2+a_1a_3+a_1a_4+a_2a_3+a_2a_4+a_3a_4.\]
Nevertheless, for the sum of squared logarithms, the ``reverse'' inequality
\begin{align*}
 (\log y_1)^2+(\log y_2)^2+(\log y_3)^2+(\log y_4)^2=1+49+49+225 =324\\<482=36+36+49+361=(\log a_1)^2+(\log a_2)^2+(\log a_3)^2+(\log a_4)^2
\end{align*}
holds true.
\end{ex}\bigskip

\begin{ex} The inequality \eqref{2bis} does not remain true either, if the function $\log(y)$ is replaced by its linearization $(y-1)$.
Indeed, let $y_1=9,\,y_2=5,\,y_3=\frac{1}{45},\, a_1=10,\,a_2=1,\,a_3=\frac{1}{10}$. Then
\[y_1+y_2+y_3>14>11.1=a_1+a_2+a_3\] and
\[y_1y_2+y_1y_3+y_2y_3>45 \geq 11.1=a_1a_2+a_1a_3+a_2a_3.\]
But
\begin{align*}
(y_1-1)^2+(y_2-1)^2+(y_3-1)^2&=64+16+\left(\frac{44}{45}\right)^2\\
&<81 <9^2+0+\left(\frac{9}{10}\right)^2=(a_1-1)^2+(a_2-1)^2+(a_3-1)^2\, .
\end{align*}
\end{ex}\bigskip

\section{Conjecture for arbitrary $n$}
The structure of the inequality in dimensions $n=2$ and $n=3$ and extensive numerical sampling strongly suggest that the inequality holds for all $n\in\N$ if the $n$ corresponding conditions are satisfied, more precisely, in terms of the elementary symmetric polynomials
\begin{conj}
Let $n\in\N$ and $y_i,a_i>0$ for $i=1,\ldots, n$. If for all $i=1,\ldots,n-1$ we have $$e_i(y_1,\ldots, y_n)\geq e_i(a_1,\ldots,a_n) \text{ and } e_n(y_1,\ldots, y_n)=e_n(a_1,\ldots, a_n),$$ then $$\sum_{i=1}^n (\log y_i)^2\geq \sum_{i=1}^n (\log a_i)^2.$$
\end{conj}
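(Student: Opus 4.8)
The plan is to follow the architecture of the $n=3$ proof. Writing $z_i=\log y_i$ and $c_i=\log a_i$, the hypothesis $e_n(y)=e_n(a)$ becomes $\sum_i z_i=\sum_i c_i$, and subtracting the common mean $s=\tfrac1n\sum_i z_i=\tfrac1n\sum_i c_i$ from every $z_i$ and every $c_i$ changes neither the truth of the inequalities $e_\ell(e^{z})\ge e_\ell(e^{c})$ (each side picks up the same factor $e^{-\ell s}$) nor the sign of $\sum z_i^2-\sum c_i^2$ (since $\sum(z_i-s)^2=\sum z_i^2-ns^2$ with the same $s$ on both sides), so we may assume $\sum_i z_i=\sum_i c_i=0$. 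The remaining hypotheses are $e_\ell(e^{z_1},\dots,e^{z_n})\ge e_\ell(e^{c_1},\dots,e^{c_n})$ for $\ell=1,\dots,n-1$ --- for $\ell=1$ this is $\sum e^{z_i}\ge\sum e^{c_i}$ and for $\ell=n-1$, using $\prod e^{z_i}=1$, it is $\sum e^{-z_i}\ge\sum e^{-c_i}$ --- and we must show $\sum z_i^2\ge\sum c_i^2$. As in Theorem~\ref{forproof} I would prove the contrapositive: if $\sum z_i=\sum c_i=0$ and $\sum z_i^2<\sum c_i^2$, then $e_\ell(e^{z})<e_\ell(e^{c})$ for at least one $\ell\in\{1,\dots,n-1\}$.

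Two ingredients would then finish the argument, generalising respectively the scaling step \eqref{23}--\eqref{31} and Consequence~\ref{cons2} of the three-dimensional proof. \textbf{Ingredient A} (monotonicity under scaling): for each fixed $\ell\in\{1,\dots,n-1\}$ and each fixed $w\in\R^n\setminus\{0\}$ with $\sum_i w_i=0$, the map $r\mapsto e_\ell(e^{rw_1},\dots,e^{rw_n})$ is strictly increasing on $(0,\infty)$. This holds because $e_\ell(e^{rw_1},\dots,e^{rw_n})=\sum_{|S|=\ell}e^{r\sigma_S}$ with $\sigma_S:=\sum_{i\in S}w_i$, the function $r\mapsto\log\sum_{|S|=\ell}e^{r\sigma_S}$ is convex (it is, up to an additive constant, a cumulant generating function; its second derivative is nonnegative by Cauchy--Schwarz), and its derivative at $r=0$ equals the mean $\binom{n}{\ell}^{-1}\sum_{|S|=\ell}\sigma_S=\tfrac{\ell}{n}\sum_i w_i=0$, so it is nondecreasing on $[0,\infty)$; strictness follows since for $1\le\ell\le n-1$ the numbers $\sigma_S$ cannot all coincide unless $w=0$. (For $\ell=1,n-1$ this is the Chebyshev-type computation \eqref{28}--\eqref{30}.) \textbf{Ingredient B} (two-moment lemma): if $t,c\in\R^n$ satisfy $\sum t_i=\sum c_i=0$ and $\sum t_i^2=\sum c_i^2$, then either $t$ and $c$ are permutations of each other, or $e_\ell(e^{t})\le e_\ell(e^{c})$ for some $\ell\in\{1,\dots,n-1\}$. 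Granting these, assume $\sum z_i^2<\sum c_i^2$; the case $z=0$ is immediate from AM--GM (then $c\ne0$ and $e_1(e^{c})=\sum e^{c_i}>n=e_1(e^{z})$), so assume $z\ne0$ and set $t=\kappa z$ with $\kappa=\sqrt{\sum c_i^2/\sum z_i^2}>1$, so that $\sum t_i=0$ and $\sum t_i^2=\sum c_i^2$. By Ingredient~A, $e_\ell(e^{z})<e_\ell(e^{t})$ for every $\ell\in\{1,\dots,n-1\}$; by Ingredient~B either there is an $\ell$ with $e_\ell(e^{t})\le e_\ell(e^{c})$, whence $e_\ell(e^{z})<e_\ell(e^{c})$, or $t$ is a permutation of $c$, in which case $z$ is a permutation of $c/\kappa$ and Ingredient~A applied to $w=c$ at scale $1/\kappa<1$ gives $e_1(e^{z})<e_1(e^{c})$; either way the contrapositive is established.

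The hard part is Ingredient~B. For $n=2$ it is trivial (equal second moment forces the two multisets to agree), and for $n=3$ it is precisely Consequence~\ref{cons2}, whose proof in this paper rests on the fact that the two scalar constraints $\sum t_i=0$, $\sum t_i^2=r^2$ confine $(t_1,t_2,t_3)$ to a one-parameter family $t_j=r\cos\!\big(\varphi+\tfrac{2\pi}{3}(j-1)\big)$ and then on the monotonicity of the single-variable function $\varphi\mapsto h(r,\varphi)$ (equivalently, of $t\mapsto t+\sqrt{3(1-t^2)}$). For $n\ge4$ the feasible set $\{t:\sum t_i=0,\ \sum t_i^2=r^2\}$ is an $(n-2)$-sphere, so no such reduction to one monotone variable is available, and a genuinely new argument --- for instance a reduction showing it suffices to vary only three coordinates at a time (so as to invoke the $n=3$ case), or a direct description of the configurations that extremise $e_\ell(e^{t})$ on that sphere --- appears to be needed. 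It is this gap that keeps the statement at the level of a conjecture.
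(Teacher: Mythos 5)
The statement you were asked to prove is left as an open conjecture in the paper: the authors offer only the analogy with the $n=2,3$ cases and numerical sampling, and give no proof for $n\ge 4$. Your proposal is therefore not being measured against a hidden argument, and the honest verdict is the one you reach yourself: it is a plan with a genuine, clearly located gap, not a proof. The reductions you perform are sound --- normalising to $\sum z_i=\sum c_i=0$, passing to the contrapositive, and your Ingredient A is correct and is exactly the right generalisation of the Chebyshev step \eqref{28}--\eqref{30}: writing $e_\ell(e^{rw})=\sum_{|S|=\ell}e^{r\sigma_S}$, the derivative $\sum_S\sigma_S e^{r\sigma_S}$ vanishes at $r=0$ because $\sum_S\sigma_S=\binom{n-1}{\ell-1}\sum_i w_i=0$, and is nondecreasing in $r$, with strictness since the $\sigma_S$ cannot all vanish for $1\le\ell\le n-1$ unless $w=0$. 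The handling of the degenerate cases ($z=0$, and $t$ a permutation of $c$) is also correct.

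The gap is Ingredient B, and you diagnose its source accurately: for $n=3$ the constraints $\sum t_i=0$, $\sum t_i^2=r^2$ parametrise the feasible set by a single angle $\varphi$, and Consequence \ref{cons2} is really a statement about one monotone function of $\varphi$ (equivalently of $t\mapsto t+\sqrt{3(1-t^2)}$); for $n\ge4$ the feasible set is an $(n-2)$-sphere and no such one-dimensional monotonicity argument is available. Nothing in the paper supplies a substitute, and you do not supply one either; you have not shown that Ingredient B is even true for $n\ge4$, only that it is what the architecture of the three-dimensional proof would require. (One should also be cautious that the paper's own $n=4$ counterexample shows the conclusion genuinely needs all $n-1$ inequalities, so any correct version of Ingredient B must use the full list $\ell=1,\dots,n-1$ and not just $\ell=1$ and $\ell=n-1$; your formulation does respect this.) In short: the proposal correctly isolates the missing lemma but does not close it, so the statement remains, as the paper says, a conjecture.
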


\section{Applications}
The investigation in this paper has been motivated by some recent applications. The new sum of squared logarithm inequality is one of the fundamental tools in deducing a novel optimality result in matrix analysis and the conditions in the form \eqref{2neff} had been deduced in the course of that work. Optimality in the matrix problem suggested the sum of squared logarithm inequality. Indeed, based on the present result in \cite{Neff_Nagatsukasa_logpolar13} it has been shown that for all invertible $Z\in\C^{3\times 3}$ and for any definition of the matrix logarithm as possibly multivalued solution $X\in\C^{3\times 3}$ of $\exp X=Z$ it holds
\begin{align}
\label{log_optimal}
   \min_{Q^*Q=I}\norm{\log Q^* Z}_F^2&=\norm{\log U_p^* Z}_F^2= \norm{\log H}_F^2      \, ,\notag\\
   \min_{Q*Q=I}\norm{\sym \log Q^* Z}_F^2&=\norm{\sym \log U_p^* Z}_F^2=\norm{\log H}_F^2\, ,
\end{align}
where $\sym X=\frac{1}{2}(X+X^*)$ is the Hermitian part of $X\in\C^{3\times 3}$ and $U_p$ is the unitary factor in the polar decomposition of $Z$ into unitary and Hermitian positive definite matrix $H$
\begin{align}
     Z=U_p\, H\, .
\end{align}
This result \eqref{log_optimal} generalizes the fact that for any complex logarithm and for all $z\in\C\setminus\{0\}$
\begin{align}
 \min_{\vartheta\in(-\pi,\pi]}\abs{\log_{\C}[e^{-i\vartheta} z]}^2&=\abs{\log_\R\abs{z}}^2\, ,\quad
 \min_{\vartheta\in(-\pi,\pi]}\abs{\Re\log_{\C}[e^{-i\vartheta} z]}^2=\abs{\log_\R\abs{z}}^2\, .
\end{align}
The optimality result \eqref{log_optimal} can now also be viewed as another characterization of the unitary factor in the polar decomposition. In addition, in a forthcoming contribution \cite{Neff_Osterbrink_hencky13} we use \eqref{log_optimal} to calculate the geodesic distance of the isochoric part of the deformation gradient $ \frac{F}{\det{F}^{\frac{1}{3}}}\in\SL(3,\R))$ to $\SO(3,\R))$ in the canonical left-invariant Riemannian metric on $\SL(3,\R)$, to the effect that
\begin{align}
    \dist_{\rm geod}^2(\frac{F}{\det{F}^{\frac{1}{3}}},\SO(3,\R) )=\norm{\dev_3\log\sqrt{F^T F}}_F^2\, .
\end{align}
where $\dev_3 X=X-\frac{1}{3}\tr{X}\, I$ is the orthogonal projection of $X\in \R^{3\times 3}$ to trace free matrices. Thereby, we provide a rigorous geometric justification for the preferred use of the Hencky-strain measure $\norm{\log\sqrt{F^TF}}_F^2$ in nonlinear elasticity and plasticity theory \cite{Hencky28}.

\bibliographystyle{amsplain}

{\footnotesize
\bibliography{literatur1}
}
\end{document}